\newcommand{\argmin}{\arg\min\limits}
\newcommand{\innerprod}[2]{\left\langle{#1},{#2}\right\rangle}
\newtheorem{cor}{Corollary}
\newtheorem{lemma}{Lemma}
\newtheorem{defi}{Definition}
\newtheorem{rem}{Remark}
\newtheorem{thm}{Theorem}
\newtheorem{asp}{Assumption}
\begin{document}
\title{Linearized ADMM for Non-convex Non-smooth Optimization with Convergence Analysis}
\date{November 1, 2017}

\author{Qinghua~Liu, Xinyue~Shen, and Yuantao~Gu\thanks{Department of Electronic Engineering and Tsinghua National Laboratory for Information Science and Technology (TNList), Tsinghua University, Beijing 100084, CHINA. The corresponding author of this paper is Yuantao Gu (gyt@tsinghua.edu.cn).}}

\maketitle

\begin{abstract}
Linearized alternating direction method of multipliers (ADMM) as an extension of
ADMM has been widely used to solve linearly constrained problems in signal processing,
machine leaning, communications, and many other fields.
Despite its broad applications in nonconvex optimization,
for a great number of nonconvex and nonsmooth objective functions,
its theoretical convergence guarantee is still an open problem.
In this paper, we propose a two-block linearized ADMM
and a multi-block parallel linearized ADMM
for problems with nonconvex and nonsmooth objectives.
Mathematically, we present that the algorithms can converge
for a broader class of objective functions under less strict assumptions
compared with previous works.
Furthermore, our proposed algorithm can update coupled variables in parallel
and work for less restrictive nonconvex problems,
where the traditional ADMM may have difficulties
in solving subproblems.

\textbf{Keywords}:
 Linearized ADMM, nonconvex optimization, multi-block ADMM, parallel computation, proximal algorithm.
\end{abstract}

\section{Introduction}

In signal processing \cite{SP}, machine learning \cite{ML},
and communication \cite{CO}, many of the recently most concerned problems,
such as compressed sensing \cite{compressed_sensing},
dictionary learning \cite{dictionary_learning}, and channel estimation \cite{channel},
can be cast as optimization problems.
In doing so, not only has the design of the solving methods been greatly facilitated,
but also a more mathematically understandable and manageable
description of the problems has been given.
While convex optimization has been well studied \cite{bertsekas1999nonlinear,bertsekas1989parallel,OP2},
nonconvex optimization has also appeared in numerous topics
such as nonnegative matrix factorization \cite{n_m_f}, phase retrieval \cite{p_r},
distributed matrix factorization \cite{d_m_f}, and distributed clustering \cite{d_c}.

The alternating direction method of multipliers (ADMM) is widely used in linearly constrained optimization problems arising in machine learning \cite{AP1,AP2},
signal processing \cite{Shen2}, as well as other fields \cite{AP3,AP4,AP5}.
First proposed in the early 1970s, it has been studied extensively \cite{ref_convex_1,ref_convex_2,ref_convex_3}. \
At the very beginning, ADMM was mainly applied in solving linearly constrained convex problems \cite{ADMM1} in the following form
\begin{equation}\label{prob:main:}
\begin{array}{ll}
\mbox{minimize} & f({\bf x})+h({\bf y}) \\
\mbox{subject to} & {\bf A}{\bf x}+{\bf B}{\bf y} = {\bf 0},
\end{array}
\end{equation}
where ${\bf x}\in\mathbb{R}^p, {\bf y}\in\mathbb{R}^q$ are variables,
and ${\bf A}\in \mathbb{R}^{n\times p},{\bf B}\in \mathbb{R}^{n\times q}$ are given.
With an augmented Lagrangian function defined as
 
\begin{align}\label{L1}
L_{\beta}({\bf x},{\bf y},{\bm\gamma})
= f({\bf x})+h({\bf y})+\langle{\bm\gamma},{\bf A}{\bf x}+{\bf B}{\bf y} \rangle +\frac{\beta}{2} \Vert {\bf A}{\bf x}+{\bf B}{\bf y} \Vert ^{2}_{2},
\end{align}
 
where ${\bm\gamma}$ is the Lagrangian dual variable,
the ADMM method updates variables iteratively as the following

\begin{align*}
&{\bf x}^{k+1}=\argmin_{{\bf x}} L_\beta ({\bf x},{\bf y}^{k},{\bm\gamma}^{k} ), \\
&{\bf y}^{k+1}=\argmin_{{\bf y}} L_\beta ({\bf x}^{k+1},{\bf y},{\bm\gamma}^{k} ), \\
&{\bm\gamma}^{k+1}={\bm\gamma}^{k}+\beta({\bf A}{\bf x}^{k+1}+{\bf B}{\bf y}^{k+1}).
\end{align*}

For ADMM applied in nonconvex problems,
although the theoretical convergence guarantee is still an open problem,
it can converge fast in many cases \cite{Shen,Chen}.
Under certain assumptions on the objective function and linear constraints,
researchers have studied the convergence of ADMM for nonconvex optimization \cite{Wang,Hong,Wang1,jiang2016structured,guo2017convergence,yang2017alternating,li2015global}.

The subproblems in ADMM can be hard to solve and have no closed form solution in many cases,
so we either use an approximate solution as a substitute in the update which might cause divergence,
or solve the subproblems by numerical algorithms which can bring computational burden.
Motivated by these issues, linearized ADMM was proposed for convex optimization \cite{lin2017extragradient,Linearized_1,Linearized_3,lin2011linearized,ling2015dlm,linear_sparse}. By linearizing the intractable part in subproblems, they make unsolvable problems solvable and reduce computational complexity. It has been applied in sparsity recovery \cite{lin2011linearized, linear_sparse,gu2014fast}, low-rank matrix completion \cite{linear_matrix}, and image restoration \cite{linear_image,nien2015fast,woo2013proximal,ng2011inexact}, and has demonstrated good performances.

When the problem scale is so large that
a two-block ADMM method may no longer be efficient or practical
\cite{bigdata_ml,bigdata_sp},
distributed algorithms are in demand to exploit
parallel computing resources \cite{bertsekas1989parallel,scutari2014distributed,scutari2016parallel}.
Multi-block ADMM was proposed to solve problems in the following form
\cite{multi-block1}
\begin{equation}\label{multi-block0}
\begin{array}{ll}
\mbox{minimize} & f_{1}({\bf x}_{1})+f_{2}({\bf x}_{2})+\cdots+f_{K}({\bf x}_{K}) \\
\mbox{subject to} & {\bf A}_{1}{\bf x}_{1}+{\bf A}_{2}{\bf x}_{2}+\cdots+{\bf A}_{K}{\bf x}_{K}= {\bf 0}.
\end{array}
\end{equation}
It allows parallel computation
\cite{ref_convex_1,Hong,he2016proximal,multi-parallel-1,multi-parallel-2,multi-parallel-3},
and has  been used in problems such as sparse statistical machine learning \cite{multi-block2}
and total variation regularized image reconstruction \cite{multi-block3}.

\subsection{Main problems}
In this paper, we study linearized ADMM algorithms
for problems with nonconvex and nonsmooth objective functions.
First, we propose a two-block linearized ADMM
for problems with \textit{coupled} variables in the following form
\begin{equation}\label{prob:main}
\begin{array}{ll}
\mbox{minimize} & g({\bf x},{\bf y})+f({\bf x})+h({\bf y}) \\
\mbox{subject to} & {\bf A}{\bf x}+{\bf B}{\bf y} = {\bf 0},
\end{array}
\end{equation}
where ${\bf x}\in\mathbb{R}^p,{\bf y}\in\mathbb{R}^q$ are variables. Functions $g$ and $h$ are differentiable and can be nonconvex. Function $f$ can be both nonconvex and nondifferentiable. The Lagrangian function for problem \eqref{prob:main} is defined as follows
 
\begin{align}
L_{\beta}({\bf x},{\bf y},{\bm\gamma}) =
 g({\bf x},{\bf y})+f({\bf x})+h({\bf y})
+\innerprod{{\bm\gamma}}{{\bf A}{\bf x}+{\bf B}{\bf y}} 
+\frac{\beta}{2}
\left\Vert {\bf A}{\bf x}+{\bf B}{\bf y} \right\Vert ^{2}_{2}.\label{Lagrangian}
\end{align}
 
Throughout, we make the following assumption.
\begin{asp}\label{asp1}
Assume that problem \eqref{prob:main} satisfies the conditions below.
\begin{itemize}
    \item[1.] Function $h({\bf y})$ is $L_h$-Lipschitz differentiable.
    \item[2.] Function $ g({\bf x},{\bf y})$ is $L_g$-Lipschitz differentiable.

    \item[3.] Function $ g({\bf x},{\bf y})+f({\bf x})+h({\bf y})$ is lower bounded and coercive  with respect to ${\bf y}$ over the feasible set
    \begin{displaymath}
    \left\{({\bf x},{\bf y})\in \mathbb{R}^{p+q}:{\bf A}{\bf x}+{\bf B}{\bf y}={\bf 0}\right\}.
    \end{displaymath}

    \item[4.] Matrix ${\bf B}$ has full column rank, and ${\bf Im}({\bf A})\subset {\bf Im}({\bf B})$.

\end{itemize}
\end{asp}
In Assumption \ref{asp1}, we put relatively weak restriction on function $f$ and matrix ${\bf A}$, which is a significant improvement over other nonconvex ADMM algorithms.

Then we propose a parallel multi-block ADMM method,
which can be seen as a special case of the first algorithm,
for problems in the following form
\begin{equation}\label{multi-block}
\begin{array}{ll}
\mbox{minimize} & g({\bf x}_{1},\ldots,{\bf x}_{K},{\bf y})+\sum_{i=1}^{K}f_{i}({\bf x}_{i})+h({\bf y}) \\
\mbox{subject to} & {\bf A}_{1}{\bf x}_{1}+\cdots+{\bf A}_{K}{\bf x}_{K}+{\bf B}{\bf y} = {\bf 0},
\end{array}
\end{equation}
where ${\bf x} =({\bf x}_{1},\ldots,{\bf x}_{K})$ and ${\bf y}$ are variables. The assumption we have on problem \eqref{multi-block} is the same as Assumption \ref{asp1}.
\subsection{Related Works}
Recently a great deal of attention has been focused on using ADMM to solve nonconvex problems \cite{Wang,Hong,Wang1,jiang2016structured,guo2017convergence,yang2017alternating,li2015global}.
The work \cite{Wang} studies the convergence of traditional ADMM under relatively strict assumptions. For instance, it requires every ${\bf A}_i$ to have full column rank and all the $f_i$ to satisfy an assumption similar to Holder condition. Besides, the parameter $\beta$ in their algorithm is required to increase linearly in the number of variable blocks, which can seriously reduce its convergence speed. The work \cite{Hong} studies the convergence of ADMM for solving nonconvex consensus and sharing problem. However, they require the nonconvex part to be Lipschitz differentiable and the nondifferentiable part to be convex. The work \cite{Hong} also studies a parallel ADMM, but it is only under the case where the Lagarangian function is separable for each block, that is, the objective function and augmented term are both separable. The work \cite{jiang2016structured} studies nonconvex ADMM under less restrictive assumptions. Their algorithm requires matrix ${\bf B}$ to have full row rank, while our algorithm requires matrix ${\bf B}$ to have full column rank, so their algorithm adapts to different optimization problems from ours. In addition, our second algorithm allows parallel computation for multi-block cases, while theirs does not.


Besides ADMM there are also other kinds of dual algorithms for multi-block nonconvex optimization. For instance, \cite{scutari2014distributed} studies a distributed dual algorithm for nonconvex constrained problem, where the integral objective function is Lipschitz differentiable and the Lagrangian function is defined without the augmented term. It can be viewed as a variation of the \emph{method of Lagrangian multiplier}, while our algorithms are variations of the \emph{Augmented Lagrangian method}. In addition, our algorithms can adapt to nonsmooth optimization even with indicator functions in the objective, while their algorithm can not.

\subsection{Contribution}

Our work has the following improvements
compared with some latest works based on ADMM
for nonconvex optimization.

\begin{itemize}
\item \textbf{Nonconvex linearized ADMM:} This is the first work to study theoretical convergence for
linearized ADMM in nonconvex optimization. By linearizing all the differentiable parts, not only the objective function but also the augmented term,  in the Lagrangian function, the subproblems
can either be transformed into a proximal problem or a quadratic problem, which are usually easier to solve than the original subproblems.
\item \textbf{Parallel Computation:} In our second algorithm, the linearization decouples the variables ${\bf x}_{1},\ldots,{\bf x}_{K}$ originally coupled in the function $g$ and $\frac{\beta}{2} \Vert \sum_{i=1}^{K}{\bf A}_{i}{\bf x}_{i}+{\bf B}{\bf y} \Vert ^{2}_{2}$, so we can update every block in parallel.
Previous works \cite{ref_convex_1,multi-parallel-1,multi-parallel-2,multi-parallel-3} have studied some parallel ADMM algorithms that can deal with coupled variables, but they are all for convex optimization. To the best of our knowledge, our second algorithm is the first one to extend such parallel ADMM to nonconvex optimization. Numerical experiment demonstrates the high efficiency of our algorithm brought by parallel computation in comparison with other latest nonconvex ADMM algorithms.

\item \textbf{Weaker assumptions:}  Our assumptions are less restrictive in comparison with previous works on nonconvex ADMM (see, e.g., \cite{Wang,Hong,Wang1,guo2017convergence,yang2017alternating,li2015global}). Specifically, we put much weaker restriction on function $f$ ($f_i$) and matrix ${\bf A}$ (${\bf A}_i$). The work \cite{jiang2016structured} has assumptions similar to ours, but the update rules are different,
and their algorithm requires matrix ${\bf B}$ to have full row rank, while we require matrix ${\bf B}$ to have full column rank.


\end{itemize}

\subsection{Outline}
The remainder of this paper is organized as follows.
In Section \ref{sec:pre} some preliminaries are introduced.
In Section \ref{two-block} we propose a two-block linearized ADMM for nonconvex problems
and  provide convergence analysis under certain broad assumption in Section \ref{two-blcok-analysis}.
In Section \ref{sec:multi-b-P-L-ADMM} we propose
a parallel muti-block linearized ADMM that can be seen as a special case of the first algorithm.
Section \ref{sec:discussion} gives detailed discussions on
the update rules and some applications to demonstrate the advantages of this work. In section \ref{sec:exp},
numerical experiments are performed to demonstrate the effectiveness and high efficiency of our algorithms.
We conclude this work in Section \ref{sec:conclusion}.

\section{Preliminary}
\label{sec:pre}

\subsection{Notation}
We use bold capital letters for matrices,
bold small case letters for vectors,
and non-bold letters for scalars.
We use ${\bf x}^{k}$ to denote the value of ${\bf x}$ after $k$th iteration
and ${\bf x}_{i}$ to denote its $i$th block.
The gradient of function $f$
at ${\bf x}$ for the $i$th component
is denoted as $\nabla_{{\bf x}_i} f({\bf x})$,
and the \textit{regular subgradient} of $f$ for the $i$th component
which is defined at a point ${\bf x}$ \cite{subgradient},
is denoted as $\partial_{i}f({\bf x})$.
The smallest eigenvalue of matrix ${\bf X}$ is denoted as $\lambda_{\bf X}$.
Without specification, $\|\cdot\|$ denotes $\ell_2$ norm. ${\bf Im}({\bf X})$ denotes the image of matrix ${\bf X}$.
In multi-block ADMM, ${\bf x} = \left[{\bf x}_1^{\rm T}, \dots, {\bf x}_K^{\rm T}\right]^{\rm T}$ denotes the collection of variables.

\subsection{Definition}

\begin{defi}\label{regular-subgradient}{\bfseries (Regular Subgradient)} \cite{subgradient}
Consider a function $f:\mathbb{R}^{n}\rightarrow \bar{\mathbb{R}}$ and  a point ${\bf x}_0$ with $f({\bf x}_0)$ finite. Then the regular subgradient of function $f$ at ${\bf x}_0$ is defined as
 \begin{align*}
\partial f({\bf x}_0) = \{{\bf v}:f({\bf x})\geq f({\bf x}_0) + \innerprod{{\bf v}}{{\bf x}-{\bf x}_0}+o(\Vert {\bf x}-{\bf x}_0\Vert)\},
\end{align*} 
where for every ${\bf v}$ the inequality holds for any $x$ in a small neighborhood of ${\bf x}_0$.
\end{defi}
  
\begin{rem}
Notice that the regular subgradient is a set.
For a differentiable function, its regular subgradient set at a point contains only its gradient at that point.
\end{rem}

\begin{defi} ({\bfseries Lipschitz Differentiable})
 Function $s({\bf y})$ is said to be  \\  $L_s$-Lipschitz differentiable if for all ${\bf y},{\bf y}^\prime$, we have
$$
\Vert\nabla s({\bf y})-\nabla s({\bf y}^\prime)\Vert_2\leq L_{s}\Vert {\bf y}-{\bf y}^\prime\Vert_2,
$$
equivalently, its gradient $\nabla s$ is Lipschitz continuous.
\end{defi}

\begin{defi}\label{coercive-func}({\bfseries Coercive Function})
Assume that function $r({\bf x}_1,{\bf x}_2)$ is defined on $\mathcal{X}$,
and for any $\Vert {\bf x}_2^k\Vert \rightarrow +\infty$ and $({\bf x}_1^k,{\bf x}_2^k) \in \mathcal{X}$,
we have $r({\bf x}_1^k,{\bf x}_2^k) \rightarrow +\infty$,
then function $r$ is said to be coercive with respect to ${\bf x}_2$ over $\mathcal{X}$.
\end{defi}
  
\begin{rem}
Any function is coercive over bounded set.
\end{rem}

%
%
%
%

\begin{algorithm}[t]
\caption{Two-block linearized ADMM algorithm}
\label{algorithm1}
\begin{algorithmic}
\STATE{ Initialize ${\bf x}^{0}, {\bf y}^{0}, {\bm\gamma}^{0}$.}
\WHILE{$\max\{\|{\bf x}^{k}-{\bf x}^{k-1}\|,\|{\bf y}^{k}-{\bf y}^{k-1}\|,\|{\bm\gamma}^{k}-{\bm\gamma}^{k-1}\|\}>\varepsilon $}
\STATE{${\bf x}^{k+1}=\argmin_{{\bf x}} \bar{f}^{k}({\bf x})$}
\STATE{${\bf y}^{k+1}=\argmin_{{\bf y}} \bar{h}^k({\bf y})$}
\STATE{${\bm\gamma}^{k+1}={\bm\gamma}^{k}+\beta({\bf A}{\bf x}^{k+1}+{\bf B}{\bf y}^{k+1})$}
\STATE{$k = k+1$}
\ENDWHILE
\RETURN $({\bf x}^{k}, {\bf y}^{k}, {\bm\gamma}^{k})$
\end{algorithmic}
\end{algorithm}

\section{Linearized ADMM: two-block and multi-block}
\label{sec:two-b-L-ADMM}

In this section, we first propose a linearized ADMM to solve
the two-block nonconvex  problem \eqref{prob:main}
possibly with function $f$ nonsmooth.
Its convergence assumption is, as far as we know,
one of the broadest among the current ADMM algorithms for nonconvex optimization.
Then we extend the algorithm to solve the multi-block problem \eqref{multi-block},
and the linearization renders the coupled multi-blocks of variables to be updated in parallel.

\subsection{Two-block linearized ADMM updating rules}\label{two-block}

In the $(k+1)$th update of ${\bf x}$, we replace $g({\bf x},{\bf y})+\frac{\beta}{2}\Vert {\bf A}{\bf x}+{\bf B}{\bf y}\Vert^{2}$ by its approximation
$$
\langle {\bf x}-{\bf x}^{k},\nabla_{{\bf x}} g({\bf x}^{k},{\bf y}^{k})+\beta {\bf A}^{\rm T}({\bf A}{\bf x}^{k}+{\bf B}{\bf y}^{k})\rangle + \frac{L_{x}}{2}\Vert {\bf x}-{\bf x}^{k}\Vert^{2},
$$
which is a linearized term plus a regularization term ($L_{x}>0$).
In the $(k+1)$th update of $\bf y$,
the algorithm replaces $g({\bf x},{\bf y})+h({\bf y})$ by its approximation
$$
\langle {\bf y}-{\bf y}^{k},\nabla_{{\bf y}} g({\bf x}^{k+1},{\bf y}^{k})+\nabla h({\bf y}^{k})\rangle + \frac{L_{y}}{2}\Vert {\bf y}-{\bf y}^{k}\Vert^{2},
$$
which is again a linearized term plus a regularization term ($L_{y}>0$). 
Replacing the corresponding parts in Lagrangian function with their approximations derived above, we readily get the following two auxiliary functions.
 \begin{align}
\bar{f}^{k}({\bf x})=&f({\bf x})+ \langle{\bm\gamma}^{k},{\bf A}{\bf x}\rangle+\frac{L_{x}}{2}\Vert {\bf x}-{\bf x}^{k}\Vert^{2}\nonumber \\& + \langle {\bf x}-{\bf x}^{k},\nabla_{{\bf x}} g({\bf x}^{k},{\bf y}^{k})+\beta {\bf A}^{\rm T}({\bf A}{\bf x}^{k}+{\bf B}{\bf y}^{k})\rangle , \label{eq-twoblock-x-update}\\
\bar{h}^k({\bf y})= &\langle{\bm\gamma}^{k},{\bf B}{\bf y}\rangle + \frac{L_{y}}{2}\Vert {\bf y}-{\bf y}^{k}\Vert^{2}+\frac{\beta}{2}\Vert {\bf A}{\bf x}^{k+1}+{\bf B}{\bf y}\Vert^{2}\nonumber \\&+\langle {\bf y}-{\bf y}^{k},\nabla_{{\bf y}} g({\bf x}^{k+1},{\bf y}^{k})+\nabla h({\bf y}^{k})\rangle.\label{eq-twoblock-y-update}
\end{align} 
Utilizing the two auxiliary functions above, the update rules are summarized in Algorithm \ref{algorithm1}.
Note that the ${\bf x}$ and ${\bf y}$ update rules in Algorithm \ref{algorithm1} can be simplified into the following form
		 \begin{align*}
		{\bf x}^{k+1}=&{\bf prox}_{f/L_x}\left\{ {\bf x}^{k}- \frac{1}{L_x}\left[\nabla_{{\bf x}} g({\bf x}^{k},{\bf y}^{k})+{\bf A}^{\rm T} {\bm\gamma}^{k}
		+\beta {\bf A}^{\rm T}({\bf A}{\bf x}^{k}+{\bf B}{\bf y}^{k})\right]\right\};\\
		{\bf y}^{k+1}=&\left(L_{y}+\beta {\bf B}^{\rm T} {\bf B}\right)^{-1}\big(L_{y}{\bf y}^{k}-\nabla_{{\bf y}} g({\bf x}^{k+1},{\bf y}^{k})-\nabla h({\bf y}^{k})-{\bf B}^{\rm T}{\bm\gamma}^{k}-\beta {\bf B}^{\rm T}{\bf A}{\bf x}^{k+1}\big).
		\end{align*} 
The subproblem in updating ${\bf x}$ is formulated into a proximal problem,
which can be easier to solve than the original subproblem and even have closed form solution \cite{parikh2014proximal}.
The matrix inversion in the ${\bf y}$-updating step can be computed beforehand,
so we do not need to compute it in every iteration.

\subsection{Convergence analysis}\label{two-blcok-analysis}

We give convergence analysis for Algorithm \ref{algorithm1} under Assumption \ref{asp1}.
Note that in this part, we refer $L_\beta$
to the augmented Lagrangian function
defined in \eqref{Lagrangian}.
To begin with, we show that $L_\beta$
and the primal and dual residues are able to converge in the following theorem.
  
\begin{thm}\label{convergence1}
For the linearized ADMM in Algorithm \ref{algorithm1}, under Assumption \ref{asp1}, if  we choose parameters $L_{x}$, $L_{y}$, and $\beta$ as follows
\begin{equation}\label{bb}
\begin{array}{l}
L_x \ge  L_g + \beta L_{\bf A} + 6L_w^2+1,\\
L_y \ge L_w +L_w^2 +3,\\
C_m = \frac{L_y+L_w^2}{2},\\
 \beta \ge \max\left\{\frac{L_w+L_y+2}{\lambda_{{\bf B}^{\rm T}{\bf B}}},\frac{3(L_w^2+L_y^2)}{\lambda_{{\bf B}^{\rm T}{\bf B}}C_m},\frac{3L_y^2}{\lambda_{{\bf B}^{\rm T}{\bf B}}}\right\},
\end{array}
\end{equation}
where $L_{{\bf A}}$ is the largest eigenvalue  of ${\bf A}^{\rm T}{\bf A}$, $\lambda_{{\bf B}^{\rm T}{\bf B}}$ is the smallest eigenvalue of ${\bf B}^{\rm T}{\bf B}$ and $L_w = L_g + L_h$,
then $\{L_{\beta}({\bf x}^{k},{\bf y}^{k},{\bm\gamma}^{k})\}$ is convergent,
and the primal residues $\Vert {\bf y}^{k+1}-{\bf y}^{k}\Vert$, $\Vert {\bf x}^{k+1}-{\bf x}^{k} \Vert$
and dual residue $\Vert {\bm\gamma}^{k+1}-{\bm\gamma}^{k}\Vert$ converge to zero as $k$ approaches infinity.
\end{thm}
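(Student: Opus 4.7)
The plan is to build a Lyapunov potential $\Phi^k = L_\beta({\bf x}^k,{\bf y}^k,{\bm\gamma}^k) + C_m\|{\bf y}^k-{\bf y}^{k-1}\|^2$, show that it decreases by at least $\alpha_x\|{\bf x}^{k+1}-{\bf x}^k\|^2 + \alpha_y\|{\bf y}^{k+1}-{\bf y}^k\|^2$ per iteration for some $\alpha_x,\alpha_y>0$, and show that it is bounded below. Together these give summability of the squared primal residues, hence $\|{\bf x}^{k+1}-{\bf x}^k\|,\|{\bf y}^{k+1}-{\bf y}^k\|\to 0$; a separate bound of $\|{\bm\gamma}^{k+1}-{\bm\gamma}^k\|$ by the primal residues then handles the dual side, and convergence of $L_\beta$ itself follows from convergence of $\Phi^k$ together with $\|{\bf y}^{k+1}-{\bf y}^k\|\to 0$.

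The decreases attributable to the two primal updates are relatively routine. For the $x$-step, the inequality $\bar f^k({\bf x}^{k+1})\le\bar f^k({\bf x}^k)$ (from $L_x$-strong convexity of the surrogate) combined with the descent lemma applied to the two linearized differentiable pieces $g(\cdot,{\bf y}^k)$ and $\tfrac{\beta}{2}\|{\bf A}\cdot+{\bf B}{\bf y}^k\|^2$ (with gradient Lipschitz constants $L_g$ and $\beta L_{\bf A}$) yields a drop of at least $\tfrac{L_x-L_g-\beta L_{\bf A}}{2}\|{\bf x}^{k+1}-{\bf x}^k\|^2$ in $L_\beta$. The same recipe applied to the $y$-step, using Lipschitz constant $L_w=L_g+L_h$ for $\nabla_{\bf y} g + \nabla h$, yields a drop of at least $\tfrac{L_y-L_w}{2}\|{\bf y}^{k+1}-{\bf y}^k\|^2$.

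The hard part is the dual step, which is an ascent: $L_\beta({\bf x}^{k+1},{\bf y}^{k+1},{\bm\gamma}^{k+1}) - L_\beta({\bf x}^{k+1},{\bf y}^{k+1},{\bm\gamma}^k) = \tfrac{1}{\beta}\|{\bm\gamma}^{k+1}-{\bm\gamma}^k\|^2$, which must be absorbed by the primal descents. I would read the optimality condition of the $y$-subproblem as ${\bf B}^{\rm T}{\bm\gamma}^{k+1}=-\nabla_{{\bf y}}g({\bf x}^{k+1},{\bf y}^k)-\nabla h({\bf y}^k)-L_y({\bf y}^{k+1}-{\bf y}^k)$, subtract the same identity at iteration $k{-}1$, apply Lipschitz continuity of $\nabla_{{\bf y}}g$ and $\nabla h$, and use full column rank of ${\bf B}$ in the form $\|{\bm\gamma}^{k+1}-{\bm\gamma}^k\|^2\le\lambda_{{\bf B}^{\rm T}{\bf B}}^{-1}\|{\bf B}^{\rm T}({\bm\gamma}^{k+1}-{\bm\gamma}^k)\|^2$. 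This produces a schematic bound $\|{\bm\gamma}^{k+1}-{\bm\gamma}^k\|^2\lesssim\|{\bf x}^{k+1}-{\bf x}^k\|^2+\|{\bf y}^{k+1}-{\bf y}^k\|^2+\|{\bf y}^k-{\bf y}^{k-1}\|^2$ with explicit constants in $L_g,L_h,L_y,\lambda_{{\bf B}^{\rm T}{\bf B}}$. The first two terms are dominated by the primal descents -- this is where the slack $+6L_w^2+1$ in the lower bound on $L_x$ and $+L_w^2+3$ in the lower bound on $L_y$ is spent -- while the trailing $\|{\bf y}^k-{\bf y}^{k-1}\|^2$ is precisely what the memory term $C_m\|{\bf y}^k-{\bf y}^{k-1}\|^2$ in $\Phi^k$ is engineered to cancel through the telescope $C_m\|{\bf y}^{k+1}-{\bf y}^k\|^2-C_m\|{\bf y}^k-{\bf y}^{k-1}\|^2$; the specific value $C_m=(L_y+L_w^2)/2$ is chosen so that this bookkeeping balances.

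Finally, lower boundedness of $\Phi^k$ uses ${\bf Im}({\bf A})\subset{\bf Im}({\bf B})$ to rewrite the dual penalty $\langle{\bm\gamma}^k,{\bf A}{\bf x}^k+{\bf B}{\bf y}^k\rangle$ in terms of a vector lying in ${\bf Im}({\bf B})$, which through the ${\bf B}^{\rm T}{\bm\gamma}^k$ identity above is controlled by $\|{\bf y}^{k+1}-{\bf y}^k\|$; the remaining objective is bounded from below by Assumption~\ref{asp1}(3), and coercivity in ${\bf y}$ over the feasible set prevents the iterates from escaping to infinity. Monotone descent of $\Phi^k$ together with this lower bound then gives convergence of $\Phi^k$, summability of the squared primal residues, and hence all three conclusions stated in the theorem.
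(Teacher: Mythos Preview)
Your overall architecture is exactly the paper's: the Lyapunov sequence $\Phi^k=m_k=L_\beta({\bf x}^k,{\bf y}^k,{\bm\gamma}^k)+C_m\|{\bf y}^k-{\bf y}^{k-1}\|^2$, the three one-step bounds (descent in ${\bf x}$, descent in ${\bf y}$, ascent in ${\bm\gamma}$ controlled via the optimality condition of the ${\bf y}$-subproblem), and the lower bound via a feasible point ${\bf y}'_k$ with ${\bf B}{\bf y}'_k=-{\bf A}{\bf x}^k$. So the route is the right one.

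There is, however, one quantitative slip that prevents the arithmetic from closing with the parameters in \eqref{bb}. For the ${\bf y}$-step you record a descent of $\tfrac{L_y-L_w}{2}\|{\bf y}^{k+1}-{\bf y}^k\|^2$, obtained from the bare inequality $\bar h^k({\bf y}^{k+1})\le\bar h^k({\bf y}^k)$ plus the descent lemma for $w=g(\,{\bf x}^{k+1},\cdot\,)+h$. The paper obtains, and needs, the stronger constant $C_1=\tfrac{2L_y-L_w}{2}$: since $\bar h^k$ is a genuine convex quadratic in ${\bf y}$ with Hessian $L_y I+\beta{\bf B}^{\rm T}{\bf B}\succeq L_y I$, the minimizer satisfies $\bar h^k({\bf y}^k)\ge\bar h^k({\bf y}^{k+1})+\tfrac{L_y}{2}\|{\bf y}^k-{\bf y}^{k+1}\|^2$, giving an extra $\tfrac{L_y}{2}$. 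With only your constant, the key coefficient
\[
C_1-C_3-C_m \;=\; \frac{L_y-L_w}{2}-\frac{3L_y^2}{\beta\lambda_{{\bf B}^{\rm T}{\bf B}}}-\frac{L_y+L_w^2}{2}
\;=\; -\frac{L_w+L_w^2}{2}-\frac{3L_y^2}{\beta\lambda_{{\bf B}^{\rm T}{\bf B}}}
\]
is strictly negative, so the telescoping bound on $\Phi^k-\Phi^{k+1}$ fails to be nonnegative and the monotone-decrease step breaks down. (This is precisely where the ``$L_x$-strong convexity of the surrogate'' remark you made for the ${\bf x}$-step is misplaced: $\bar f^k$ need not be strongly convex since $f$ is nonconvex, but $\bar h^k$ \emph{is} strongly convex, and that is where the extra $\tfrac{L_y}{2}$ must be harvested.)

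One smaller point: the inequality $\|{\bm\gamma}^{k+1}-{\bm\gamma}^k\|^2\le\lambda_{{\bf B}^{\rm T}{\bf B}}^{-1}\|{\bf B}^{\rm T}({\bm\gamma}^{k+1}-{\bm\gamma}^k)\|^2$ does not follow from full column rank of ${\bf B}$ alone (when $q<n$, ${\bf B}{\bf B}^{\rm T}$ has a nontrivial kernel). It requires ${\bm\gamma}^{k+1}-{\bm\gamma}^k\in{\bf Im}({\bf B})$, which is where the hypothesis ${\bf Im}({\bf A})\subset{\bf Im}({\bf B})$ together with the ${\bm\gamma}$-update enters; the paper isolates this as a separate lemma. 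Likewise, the lower bound on $\Phi^k$ only needs lower boundedness of the objective over the feasible set (Assumption~\ref{asp1}(3)); coercivity in ${\bf y}$ is used later for boundedness of the dual sequence, not here.
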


\begin{proof}
We briefly introduce the structure of the proof here and the detailed version is  postponed to Appendix \ref{proof-convergence1}.
  
First, we will prove that the descent of $L_\beta$ after the $(k+1)$th iteration of ${\bf x}$ is lower bounded by $\Vert {\bf x}^{k+1}-{\bf x}^k\Vert$, the descent of $L_\beta$ after the $(k+1)$th iteration of ${\bf y}$ is lower bounded by $\Vert {\bf y}^{k+1}-{\bf y}^k\Vert$, and the ascent of $L_\beta$ after the $(k+1)$th iteration of ${\bf \gamma}$ is upper bounded by $\Vert {\bf x}^{k+1}-{\bf x}^k\Vert$, $\Vert {\bf y}^{k+1}-{\bf y}^k\Vert$ and $\Vert {\bf y}^{k}-{\bf y}^{k-1}\Vert$. Then, we will elaborately design an auxiliary sequence and prove its monotonicity and convergence. Finally, based on these conclusions, we will obtain the convergence of $L_\beta$ and both the primal and dual residues.
\end{proof}
  
Theorem \ref{convergence1} illustrates that
the function $L_\beta$ will converge,
and the increments of ${\bf x}$, ${\bf y}$, and ${\bm\gamma}$ after one iteration,
which are the primal and dual residues, will converge to zero.
  
\begin{cor}\label{bound1}
For the  linearized ADMM in Algorithm \ref{algorithm1},  under Assumption \ref{asp1}
together with function $g({\bf x},{\bf y})$ degenerating to $g({\bf x})$,
if we choose the parameters $L_x$, $L_{y}$ and $\beta$ satisfying \eqref{bb}, then
the generated dual variable sequence $\{{\bm\gamma}^{k}\}$ is bounded.
\end{cor}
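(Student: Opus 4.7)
\textbf{Proof plan for Corollary \ref{bound1}.}

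My plan is to bound $\{\bm\gamma^k\}$ by first pinning down ${\bf B}^{\rm T}\bm\gamma^{k+1}$ via the ${\bf y}$-update optimality, then leveraging the full column rank of ${\bf B}$ together with the inclusion ${\bf Im}({\bf A}) \subset {\bf Im}({\bf B})$, and finally using coercivity to control $\{{\bf y}^k\}$.

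First, setting $\nabla_{\bf y}\bar{h}^k({\bf y}^{k+1}) = 0$ in \eqref{eq-twoblock-y-update}, substituting the dual update $\bm\gamma^{k+1} = \bm\gamma^k + \beta({\bf A}{\bf x}^{k+1} + {\bf B}{\bf y}^{k+1})$, and using the fact that $g({\bf x}, {\bf y})$ degenerates to $g({\bf x})$ (so $\nabla_{\bf y} g \equiv 0$), yields
\begin{equation*}
{\bf B}^{\rm T}\bm\gamma^{k+1} = -L_y({\bf y}^{k+1} - {\bf y}^k) - \nabla h({\bf y}^k).
\end{equation*}
Because ${\bf Im}({\bf A}) \subset {\bf Im}({\bf B})$, every dual increment $\bm\gamma^{k+1} - \bm\gamma^k = \beta({\bf A}{\bf x}^{k+1} + {\bf B}{\bf y}^{k+1})$ lies in ${\bf Im}({\bf B})$; after taking $\bm\gamma^0 \in {\bf Im}({\bf B})$ (or splitting off a frozen null-space component), the whole sequence $\{\bm\gamma^k\}$ sits in ${\bf Im}({\bf B})$. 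On ${\bf Im}({\bf B})$ the full column rank of ${\bf B}$ gives $\|\bm\gamma\|^2 \le \|{\bf B}^{\rm T}\bm\gamma\|^2/\lambda_{{\bf B}^{\rm T}{\bf B}}$, hence
\begin{equation*}
\|\bm\gamma^{k+1}\| \le \frac{1}{\sqrt{\lambda_{{\bf B}^{\rm T}{\bf B}}}}\Big(L_y\|{\bf y}^{k+1} - {\bf y}^k\| + \|\nabla h({\bf y}^k)\|\Big).
\end{equation*}

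By Theorem \ref{convergence1}, $\|{\bf y}^{k+1} - {\bf y}^k\| \to 0$, and $L_h$-Lipschitz differentiability of $h$ gives $\|\nabla h({\bf y}^k)\| \le \|\nabla h({\bf 0})\| + L_h\|{\bf y}^k\|$, so the bound above reduces the task to showing that $\{{\bf y}^k\}$ is bounded. For this, I would argue by contradiction using coercivity. Theorem \ref{convergence1} also says that $\{L_\beta^k\}$ is bounded, while $\bm\gamma^k - \bm\gamma^{k-1} = \beta({\bf A}{\bf x}^k + {\bf B}{\bf y}^k)$ combined with the vanishing dual residue gives $\|{\bf A}{\bf x}^k + {\bf B}{\bf y}^k\| \to 0$. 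Define the feasible shadow $\tilde{\bf y}^k := {\bf y}^k - ({\bf B}^{\rm T}{\bf B})^{-1}{\bf B}^{\rm T}({\bf A}{\bf x}^k + {\bf B}{\bf y}^k)$, so that $({\bf x}^k, \tilde{\bf y}^k)$ is exactly feasible and $\|{\bf y}^k - \tilde{\bf y}^k\| \to 0$. If $\|{\bf y}^{k_j}\| \to \infty$ along some subsequence, then $\|\tilde{\bf y}^{k_j}\| \to \infty$ as well, and the coercivity assumption forces $g({\bf x}^{k_j}) + f({\bf x}^{k_j}) + h(\tilde{\bf y}^{k_j}) \to +\infty$; substituting into $L_\beta^{k_j}$ and cancelling the remaining inner-product and augmented terms would contradict boundedness of $L_\beta$.

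The main obstacle is handling the leftover cross terms cleanly: since $\|\nabla h\|$ grows linearly in $\|{\bf y}\|$, both $|h({\bf y}^k) - h(\tilde{\bf y}^k)|$ and $\innerprod{\bm\gamma^k}{{\bf A}{\bf x}^k + {\bf B}{\bf y}^k}$ are \emph{a priori} unbounded even though the constraint residual vanishes. The trick I expect to need is to rewrite $\innerprod{\bm\gamma^k}{{\bf A}{\bf x}^k + {\bf B}{\bf y}^k} = \innerprod{{\bf B}^{\rm T}\bm\gamma^k}{{\bf M}{\bf x}^k + {\bf y}^k}$ for any ${\bf M}$ with ${\bf B}{\bf M} = {\bf A}$ (which exists by ${\bf Im}({\bf A}) \subset {\bf Im}({\bf B})$), and to use the identity $\|{\bf M}{\bf x}^k + {\bf y}^k\| = \|({\bf B}^{\rm T}{\bf B})^{-1}{\bf B}^{\rm T}({\bf A}{\bf x}^k + {\bf B}{\bf y}^k)\|$, so the cross term shrinks at the same rate as the vanishing residual and cancels the linear growth of ${\bf B}^{\rm T}\bm\gamma^k$ in $\|{\bf y}\|$. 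The difference $|h({\bf y}^k) - h(\tilde{\bf y}^k)|$ is then bounded via a mean-value estimate plus the linear growth of $\|\nabla h\|$, and the coercive blowup dominates, completing the contradiction and yielding the boundedness of $\{\bm\gamma^k\}$.
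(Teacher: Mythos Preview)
Your overall structure is the same as the paper's: derive ${\bf B}^{\rm T}\bm\gamma^{k+1}=-L_y({\bf y}^{k+1}-{\bf y}^k)-\nabla h({\bf y}^k)$ from the ${\bf y}$-optimality, reduce boundedness of $\{\bm\gamma^k\}$ to that of $\{{\bf B}^{\rm T}\bm\gamma^k\}$ via ${\bf Im}({\bf A})\subset{\bf Im}({\bf B})$ and the full column rank of ${\bf B}$, and then reduce to boundedness of $\{{\bf y}^k\}$, which you attack with coercivity on a feasible shadow $\tilde{\bf y}^k$. The gap is in this last reduction. You propose to bound $\langle\bm\gamma^k,{\bf A}{\bf x}^k+{\bf B}{\bf y}^k\rangle$ and $|h({\bf y}^k)-h(\tilde{\bf y}^k)|$ \emph{separately}, each as a product of a factor growing at most linearly in $\|{\bf y}^k\|$ and a factor vanishing with the residual. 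But that product is an indeterminate form: Theorem~\ref{convergence1} gives no rate on $\|{\bf A}{\bf x}^k+{\bf B}{\bf y}^k\|\to 0$, and coercivity gives no growth rate on the objective, so nothing you have written prevents the cross terms from outpacing the coercive blowup.

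The paper resolves this by recalling, from the lower-bound part of the proof of Theorem~\ref{convergence1}, the inequality $m_k\ge g({\bf x}^k)+f({\bf x}^k)+h({\bf y}'_k)+\tfrac12\|{\bf y}^k-{\bf y}'_k\|^2$, where ${\bf B}{\bf y}'_k=-{\bf A}{\bf x}^k$ is your $\tilde{\bf y}^k$. The point in deriving that bound is that the two troublesome terms cancel \emph{each other}, not the residual: substituting ${\bf B}^{\rm T}\bm\gamma^k=-\nabla h({\bf y}^{k-1})-L_y({\bf y}^k-{\bf y}^{k-1})$ into $\langle\bm\gamma^k,{\bf B}({\bf y}^k-{\bf y}'_k)\rangle$ produces $-\langle\nabla h({\bf y}^k),{\bf y}^k-{\bf y}'_k\rangle$ up to terms controlled by $\|{\bf y}^k-{\bf y}^{k-1}\|$, and then the descent lemma gives $h({\bf y}^k)-\langle\nabla h({\bf y}^k),{\bf y}^k-{\bf y}'_k\rangle+\tfrac{L_w}{2}\|{\bf y}^k-{\bf y}'_k\|^2\ge h({\bf y}'_k)$, with the needed quadratic supplied by the augmented term $\tfrac{\beta}{2}\|{\bf B}({\bf y}^k-{\bf y}'_k)\|^2$ since $\beta\lambda_{{\bf B}^{\rm T}{\bf B}}\ge L_w+L_y+2$ from \eqref{bb}. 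With this inequality in hand, coercivity bounds $\{{\bf y}'_k\}$, the leftover $\tfrac12\|{\bf y}^k-{\bf y}'_k\|^2$ then bounds $\{{\bf y}^k\}$, and the rest of your argument is correct.
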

  
\begin{proof}
The proof is postponed to Appendix \ref{proof-bound1}.
\end{proof}

  
\begin{thm}\label{minimumy1}
For the  linearized ADMM in Algorithm \ref{algorithm1}, under Assumption \ref{asp1},  if we choose the parameters $L_x$, $L_{y}$, and $\beta$ satisfying \eqref{bb}, then
the sequence $\{({\bf x}^{k},{\bf y}^{k},{\bm\gamma}^{k})\}$ satisfies
 \begin{align*}
\lim_{k\rightarrow \infty} \nabla_{{\bm\gamma}}L_{\beta}({\bf x}^{k},{\bf y}^{k},{\bm\gamma}^{k})=\lim_{k\rightarrow \infty}{\bf A}{\bf x}^{k}+{\bf B}{\bf y}^{k}={\bf 0},
\end{align*} 
 \begin{align*}
\lim_{k\rightarrow \infty} \nabla_{{\bf y}}L_{\beta}({\bf x}^{k},{\bf y}^{k},{\bm\gamma}^{k})={\bf 0},
\end{align*} 
and that there exits
\begin{equation}	
\bar{{\bf d}}^{k}\in\partial_{{\bf x}}L_{\beta}({\bf x}^{k},{\bf y}^{k},{\bm\gamma}^{k}) \mbox{ such that }  \lim_{k\rightarrow \infty}\bar{{\bf d}}^{k}={\bf 0}.
\end{equation}	
\end{thm}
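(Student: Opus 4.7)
The plan is to read off all three limits from Theorem~\ref{convergence1}, using the explicit update formulas to express the stationarity residuals of $L_\beta$ at iterate $({\bf x}^{k+1},{\bf y}^{k+1},{\bm\gamma}^{k+1})$ purely in terms of the primal and dual increments $\|{\bf x}^{k+1}-{\bf x}^k\|$, $\|{\bf y}^{k+1}-{\bf y}^k\|$, and $\|{\bm\gamma}^{k+1}-{\bm\gamma}^k\|$, each of which vanishes by Theorem~\ref{convergence1}. Lipschitz differentiability of $g$ and $h$ will let me pay for the mismatches between the gradients of $g,h$ evaluated at $({\bf x}^{k+1},{\bf y}^{k+1})$ and those evaluated at the intermediate points that show up in the subproblem optimality conditions.

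First I handle the dual residual. From the $\bm\gamma$-update ${\bm\gamma}^{k+1}={\bm\gamma}^k+\beta({\bf A}{\bf x}^{k+1}+{\bf B}{\bf y}^{k+1})$ we get $\|{\bf A}{\bf x}^{k+1}+{\bf B}{\bf y}^{k+1}\|=\beta^{-1}\|{\bm\gamma}^{k+1}-{\bm\gamma}^k\|\to 0$, which is exactly $\nabla_{\bm\gamma}L_\beta\to {\bf 0}$. Next I handle the ${\bf y}$-gradient. The first-order optimality of $\bar h^k$ at ${\bf y}^{k+1}$ gives
\begin{equation*}
{\bf B}^{\rm T}{\bm\gamma}^k+\beta{\bf B}^{\rm T}({\bf A}{\bf x}^{k+1}+{\bf B}{\bf y}^{k+1})+L_y({\bf y}^{k+1}-{\bf y}^k)+\nabla_{\bf y}g({\bf x}^{k+1},{\bf y}^k)+\nabla h({\bf y}^k)=\mathbf{0}.
\end{equation*}
Plugging ${\bf B}^{\rm T}{\bm\gamma}^{k+1}={\bf B}^{\rm T}{\bm\gamma}^k+\beta{\bf B}^{\rm T}({\bf A}{\bf x}^{k+1}+{\bf B}{\bf y}^{k+1})$ into $\nabla_{\bf y}L_\beta({\bf x}^{k+1},{\bf y}^{k+1},{\bm\gamma}^{k+1})$ and substituting the displayed identity yields
\begin{equation*}
\nabla_{\bf y}L_\beta^{k+1}=\bigl[\nabla_{\bf y}g({\bf x}^{k+1},{\bf y}^{k+1})-\nabla_{\bf y}g({\bf x}^{k+1},{\bf y}^k)\bigr]+\bigl[\nabla h({\bf y}^{k+1})-\nabla h({\bf y}^k)\bigr]-L_y({\bf y}^{k+1}-{\bf y}^k).
\end{equation*}
Applying the Lipschitz bounds for $\nabla g$ and $\nabla h$ bounds $\|\nabla_{\bf y}L_\beta^{k+1}\|$ by a constant multiple of $\|{\bf y}^{k+1}-{\bf y}^k\|$, which tends to zero by Theorem~\ref{convergence1}.

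For the ${\bf x}$-subgradient claim, I exploit that ${\bf x}^{k+1}=\arg\min\bar f^k({\bf x})$ and $f$ is the only nonsmooth summand in $\bar f^k$, so by Fermat's rule for the regular subgradient there exists ${\bf v}^{k+1}\in\partial f({\bf x}^{k+1})$ with
\begin{equation*}
{\bf v}^{k+1}+{\bf A}^{\rm T}{\bm\gamma}^k+L_x({\bf x}^{k+1}-{\bf x}^k)+\nabla_{\bf x}g({\bf x}^k,{\bf y}^k)+\beta{\bf A}^{\rm T}({\bf A}{\bf x}^k+{\bf B}{\bf y}^k)=\mathbf{0}.
\end{equation*}
Take the canonical element $\bar{\bf d}^{k+1}:={\bf v}^{k+1}+\nabla_{\bf x}g({\bf x}^{k+1},{\bf y}^{k+1})+{\bf A}^{\rm T}{\bm\gamma}^{k+1}+\beta{\bf A}^{\rm T}({\bf A}{\bf x}^{k+1}+{\bf B}{\bf y}^{k+1})\in\partial_{\bf x}L_\beta({\bf x}^{k+1},{\bf y}^{k+1},{\bm\gamma}^{k+1})$. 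Subtracting the two displays and using ${\bm\gamma}^{k+1}-{\bm\gamma}^k=\beta({\bf A}{\bf x}^{k+1}+{\bf B}{\bf y}^{k+1})$ to combine the $\beta{\bf A}^{\rm T}$ terms gives
\begin{equation*}
\bar{\bf d}^{k+1}=-L_x({\bf x}^{k+1}-{\bf x}^k)+\bigl[\nabla_{\bf x}g({\bf x}^{k+1},{\bf y}^{k+1})-\nabla_{\bf x}g({\bf x}^k,{\bf y}^k)\bigr]+{\bf A}^{\rm T}({\bm\gamma}^{k+1}-{\bm\gamma}^k)+\beta{\bf A}^{\rm T}{\bf B}({\bf y}^{k+1}-{\bf y}^k),
\end{equation*}
which Lipschitz bounds control by $\|{\bf x}^{k+1}-{\bf x}^k\|+\|{\bf y}^{k+1}-{\bf y}^k\|+\|{\bm\gamma}^{k+1}-{\bm\gamma}^k\|$, again $\to 0$.

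The only subtle step is the ${\bf x}$-subgradient construction: I must be careful to apply Fermat's rule for the regular subgradient (Definition~\ref{regular-subgradient}) to the nonsmooth but additively simple $\bar f^k$, and then invoke the sum rule only in the trivial direction of asserting membership $\bar{\bf d}^{k+1}\in\partial_{\bf x}L_\beta$, not equality of the subdifferentials. Everything else is bookkeeping.
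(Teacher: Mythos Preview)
Your proposal is correct and follows essentially the same route as the paper: express each stationarity residual via the subproblem optimality conditions and the ${\bm\gamma}$-update, then bound by the primal/dual increments that Theorem~\ref{convergence1} sends to zero. Two minor bookkeeping slips do not affect the conclusion: your displayed identity for $\nabla_{\bf y}L_\beta^{k+1}$ drops an additive $\beta{\bf B}^{\rm T}({\bf A}{\bf x}^{k+1}+{\bf B}{\bf y}^{k+1})={\bf B}^{\rm T}({\bm\gamma}^{k+1}-{\bm\gamma}^k)$, and your displayed $\bar{\bf d}^{k+1}$ drops a $\beta{\bf A}^{\rm T}{\bf A}({\bf x}^{k+1}-{\bf x}^k)$ term, but both vanish in the limit as well.
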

  
\begin{proof}
The proof is postponed to Appendix \ref{proof-minimumy1}.
\end{proof}
  
Theorem \ref{minimumy1} illustrates that the sequence $\{({\bf x}^{k},{\bf y}^{k})\}$ will converge to the feasible set and the derivative of the Lagrangian function with respective to primal variables will converge to zero.  In other words, the limit points of $\{({\bf x}^{k},{\bf y}^{k})\}$, if exist, should be saddle points of $L_\beta$, alternatively KKT points to the original linearly constrained problem.

  
\begin{cor}\label{obj_converge}
For the  linearized ADMM in Algorithm \ref{algorithm1},  under Assumption \ref{asp1} together with function $g({\bf x},{\bf y})$ degenerating to $g({\bf x})$, if we choose the parameters $L_x$, $L_{y}$, and $\beta$ satisfying \eqref{bb}, then the sequence \{$g({\bf x}^k)+f({\bf x}^{k})+h({\bf y}^{k})$\} is convergent.
\end{cor}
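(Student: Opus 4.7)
The plan is to isolate $g({\bf x}^k)+f({\bf x}^k)+h({\bf y}^k)$ inside the augmented Lagrangian and show the other terms vanish in the limit. By definition \eqref{Lagrangian} (with $g({\bf x},{\bf y})$ degenerated to $g({\bf x})$),
\begin{equation*}
g({\bf x}^k)+f({\bf x}^k)+h({\bf y}^k) = L_\beta({\bf x}^k,{\bf y}^k,{\bm\gamma}^k) - \innerprod{{\bm\gamma}^k}{{\bf A}{\bf x}^k+{\bf B}{\bf y}^k} - \frac{\beta}{2}\Vert {\bf A}{\bf x}^k+{\bf B}{\bf y}^k\Vert^2,
\end{equation*}
so it suffices to argue convergence of the right-hand side term by term.

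First, Theorem \ref{convergence1} directly yields that $\{L_\beta({\bf x}^k,{\bf y}^k,{\bm\gamma}^k)\}$ converges. Next, Theorem \ref{minimumy1} gives ${\bf A}{\bf x}^k+{\bf B}{\bf y}^k \to {\bf 0}$, which immediately forces the quadratic penalty $\frac{\beta}{2}\Vert {\bf A}{\bf x}^k+{\bf B}{\bf y}^k\Vert^2 \to 0$. The only remaining term is the inner product $\innerprod{{\bm\gamma}^k}{{\bf A}{\bf x}^k+{\bf B}{\bf y}^k}$; to handle it, I would invoke Corollary \ref{bound1}, which is precisely the statement that under the degeneracy hypothesis $g({\bf x},{\bf y})=g({\bf x})$ the dual sequence $\{{\bm\gamma}^k\}$ is bounded. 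A bounded sequence paired with one converging to zero produces an inner product converging to zero (via Cauchy--Schwarz), so this third term also vanishes.

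Combining the three pieces, $g({\bf x}^k)+f({\bf x}^k)+h({\bf y}^k)$ has the same limit as $L_\beta({\bf x}^k,{\bf y}^k,{\bm\gamma}^k)$, and is therefore convergent.

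The only subtle point, and the reason the corollary carries the extra hypothesis beyond Assumption \ref{asp1}, is the boundedness of $\{{\bm\gamma}^k\}$: without it, the inner-product term could oscillate even though the residual ${\bf A}{\bf x}^k+{\bf B}{\bf y}^k$ vanishes. That boundedness is exactly what the degeneracy assumption $g({\bf x},{\bf y})=g({\bf x})$ buys in Corollary \ref{bound1}, so the proof reduces to wiring together Theorem \ref{convergence1}, Theorem \ref{minimumy1}, and Corollary \ref{bound1}; no new estimate is required.
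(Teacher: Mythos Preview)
Your proof is correct and follows essentially the same route as the paper: both isolate the objective from $L_\beta$, invoke Theorem~\ref{convergence1} for convergence of $L_\beta$, Theorem~\ref{minimumy1} for the residual ${\bf A}{\bf x}^k+{\bf B}{\bf y}^k\to{\bf 0}$, and Corollary~\ref{bound1} for boundedness of $\{{\bm\gamma}^k\}$ to kill the inner-product term. Your write-up is in fact slightly more explicit (Cauchy--Schwarz, and the remark on why the degeneracy hypothesis is needed) than the paper's version.
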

  
\begin{proof}
The proof is postponed to Appendix \ref{proof-obj}.
\end{proof}

\begin{algorithm}
\caption{Multi-block parallel linearized ADMM algorithm}
\label{algorithm2}
\begin{algorithmic}
\STATE{ Initialize ${\bf x}^{0}, {\bf y}^{0}, {\bm\gamma}^{0}$.}\
\WHILE{$\max\{\|{\bf x}^{k}-{\bf x}^{k-1}\|,\|{\bf y}^{k}-{\bf y}^{k-1}\|,\|{\bm\gamma}^{k}-{\bm\gamma}^{k-1}\|\}>\varepsilon $}
\FOR{$i=1,\ldots,K$ \textbf{in parallel}}
\STATE{${\bf x}_{i}^{k+1}=\argmin_{{\bf x}_{i}}\bar{f}^{k}_{i}({\bf x}_{i})$}
\ENDFOR
\STATE{${\bf y}^{k+1}=\argmin_{{\bf y}} \bar{h}^k({\bf y})$}
\STATE{${\bm\gamma}^{k+1}={\bm\gamma}^{k}+\beta({\bf A}{\bf x}^{k+1}+{\bf B}{\bf y}^{k+1})$}
\STATE{$k = k+1$}
\ENDWHILE
\RETURN $({\bf x}^{k}, {\bf y}^{k}, {\bm\gamma}^{k})$
\end{algorithmic}
\end{algorithm}

\subsection{Multi-block parallel linearized ADMM}
\label{sec:multi-b-P-L-ADMM}

In this part, we focus the multi-block optimization problem \eqref{multi-block},
which can be seen as a special case of problem \eqref{prob:main},
where $f({\bf x})$ is further assumed to be separable across the blocks ${\bf x}_i$ for $i = 1,\ldots,K$.
We apply Algorithm \ref{algorithm1} to problem \eqref{multi-block}
and arrive at a multi-block linearized ADMM, which can update blocks of variables
in parallel even when they are coupled in the Lagrangian function.

To be specific,
because of  the linearization we use in ${\bf x}$-updating step,
the blocks ${\bf x}_1,\ldots,{\bf x}_K$ are decoupled in $\bar{f}^k({\bf x})$, so they can be optimized in parallel. In this case, we have $\bar{f}^k({\bf x}) = \sum_{i=1}^{K} \bar{f}^{k}_{i}({\bf x}_{i})$ where
 \begin{align}
 \bar{f}^{k}_{i}({\bf x}_{i})=f_{i}({\bf x}_{i})+\langle{\bm\gamma}^{k},{\bf A}_{i}{\bf x}_{i}\rangle+\frac{L_{x}}{2}\Vert {\bf x}_{i}-{\bf x}_{i}^{k}\Vert^{2} 
 +\big\langle {\bf x}_{i}-{\bf x}_{i}^{k},\nabla_{{\bf x}_i} g({\bf x}^{k},{\bf y}^{k})+\beta {\bf A}_i^{\rm T}({\bf A}{\bf x}^{k}+{\bf B}{\bf y}^{k})\big\rangle\label{eq-mblock-x-update0}
 \end{align} 
Utilizing the auxiliary functions \eqref{eq-twoblock-y-update} and \eqref{eq-mblock-x-update0}, the update rules are listed in Algorithm \ref{algorithm2}.
Similar to Algorithm \ref{algorithm1}, the updating rules for ${\bf x}$ and ${\bf y}$ in Algorithm \ref{algorithm2} can be simplified into the following form
	 \begin{align*}
	{\bf x}_i^{k+1} =& {\bf prox}_{f_i/L_x}\bigg\{ {\bf x}_i^{k}- \frac{1}{L_x}\big[{\bf A}_i^{\rm T} {\bm\gamma}^{k}+\nabla_{{\bf x}_i} g({\bf x}^{k},{\bf y}^{k})+
	\beta {\bf A}_i^{\rm T}({\bf A}{\bf x}^{k}+{\bf B}{\bf y}^{k})\big]\bigg\};\\
	{\bf y}^{k+1}=&\left(L_{y}+\beta {\bf B}^{\rm T} {\bf B}\right)^{-1}\big(L_{y}{\bf y}^{k}-\nabla_{{\bf y}} g({\bf x}^{k+1},{\bf y}^{k})
	-\nabla h({\bf y}^{k})-{\bf B}^{\rm T}{\bm\gamma}^{k}-\beta {\bf B}^{\rm T}{\bf A}{\bf x}^{k+1}\big).
	\end{align*} 
Because Algorithm \ref{algorithm2} can be seen as a special case of Algorithm \ref{algorithm1},
by replacing $f({\bf x})$ with $\sum_{i=1}^{K}f_{i}({\bf x}_{i})$
the theoretical convergence analyses for Algorithm \ref{algorithm1} can be directly applied to
Algorithm \ref{algorithm2}, so its convergence assumptions and results remain the same.

\section{Discussion}
\label{sec:discussion}

In this section, we give some discussion on our algorithms and their possible applications.

\subsection{Proximal term}
There are two main reasons why we use the proximal term in our algorithms. Firstly, in the proof of Lemma \ref{x-update} and \ref{y-update}, we will show that the descent of the Lagrangian function from updating primal variables is guaranteed due to the proximal term, so that we can put almost no restriction on $f$ ($f_i$). Secondly, the linearization skill used in our algorithm is a trade-off between the cost of solving subproblems and the accuracy of the solution to the subproblems, so it inevitably brings in \emph{inexactness}. Intuitively, the proximal term controls this \emph{inexactness} to be not too large so that our algorithm can converge.

\subsection{Linearization versus parallel computation}\label{lvsp}
As mentioned above, we actually get an inexact solution to the subproblems by solving the linearized subproblems, so intuitively the linearization skill would slow the convergence speed of the algorithm. It is indeed the case in our algorithm, but the linearization also decouples the variables coupled in the Lagrangian function, which reduces the time cost of a single iteration due to parallel computation. As a result, the time cost of the algorithm is determined by the balance between the deceleration from linearization and the acceleration from parallel computation. In  Section \ref{sec:exp}, we will empirically demonstrate that the acceleration can overwhelm the deceleration. Therefore, our algorithm can enjoy higher time efficiency in comparison with other nonconvex ADMM algorithms without linearization.

\subsection{Application}

In this part we present that the following general classes
of problems can meet the requirements in Assumption \ref{asp1}.
Consequently, our theorems guarantee the convergence of the algorithms,
if the problem belongs to one of the following commonly encountered classes.

\subsubsection{Sparsity relate topics}
Assume that $l({\bf x})$ is a loss function satisfying the following conditions.
\begin{itemize}
\item \textbf{Lipschitz Differentiability:} $l$ is differentiable, and there exits constant $L$ such that $\Vert \nabla l({\bf x}_1)-\nabla l({\bf x}_2)\Vert \le L\Vert {\bf x}_1 -{\bf x}_2\Vert$ for any ${\bf x}_1$, ${\bf x}_2$.
\item \textbf{Coercivity:} $l({\bf x})$ tends to infinity as $\Vert {\bf x} \Vert$ tends to infinity.
\end{itemize}
Then the following general sparsity related problem can be solved by our algorithm with convergence guarantee
\begin{equation}\label{dis_sparse}
\begin{array}{ll}
\mbox{minimize}&  \lambda\sum_{i=1}^{N} F( x_i)+l( {\bf y}- {\bf b}),\\
\mbox{subject to} & {\bf A}{\bf x}-{\bf y} = {\bf 0},
\end{array}
\end{equation}
where $F(\cdot)$ is some sparsity inducing function. For example, $F$ can be the $\ell_p$-norm ($0\le q \le 1$) or other nonconvex sparsity measures.
It is easy to verify that the above problem satisfies Assumption \ref{asp1}.

\subsubsection{Indicator function of compact manifold}

The indicator function of a compact manifold $\mathcal{M}$ is defined as follows
 \begin{align*}
\tau({\bf x})=
\begin{cases}
+\infty &  {\bf x}\notin \mathcal{M},\\
0 &  {\bf x}\in\mathcal{M}.
\end{cases}
\end{align*}

\begin{rem}\label{example}
Consider the following general form of problem,
where $\mathcal{M}$ is a finite subset of $\mathbb{Z}$, and $f$ is lower bounded over $\mathcal{M}$.
\begin{equation}\label{example_manifold}
\mbox{minimize} \,\, f({\bf y}) \quad
\mbox{subject to} \,\, {\bf y}\in \mathcal{M}.
\end{equation}
This problem is called \textbf{integer programming}
which is widely used in network design \cite{merchant1995assignment},
smart grid \cite{li2005price}, statistic learning \cite{glover1986future}, and other fields \cite{pallottino2002conflict}.
Problem \eqref{example_manifold} can be converted to the following
\begin{equation}\label{example_converted}
\begin{array}{ll}
\mbox{minimize} & \tau ({\bf x})+\big(f({\bf x})-h({\bf x})\big)+h({\bf y}) \\
\mbox{subject to} & {\bf x}={\bf y},
\end{array}
\end{equation}
where $\tau ({\bf x})$ is the indicator function of $\mathcal{M}$,
and function $h$ can be any nonzero Lipschitz function.
It can be verified that problem \eqref{example_converted} satisfies Assumption \ref{asp1}, if $h$ is Lipchitz differentiable.
In practice function $h$ can be appropriately chosen for solving the subproblems.
\end{rem}

\section{Numerical Experiment}\label{sec:exp}

In this section, we solve a nonconvex regularized LASSO by Algorithm \ref{algorithm2}
and two other reference ADMM algorithms,
in order to show the convergence behavior of our method and its advantage in run time
brought by parallel computation.

In sparsity related fields, many works have hinted that nonconvex penalties
can induce better sparsity than the convex ones (see, e.g., \cite{gasso2009recovering,chartrand2007exact,saab2010sparse} etc).
Our problem of interest is an improvement over LASSO,
where the traditional $l_1$-norm is replaced by a more effective nonconvex sparsity measure \cite{chen}.
The optimization problem is as the following
\begin{equation}	\label{exp1}
\mbox{minimize} \quad \lambda\sum_{i=1}^{N} F( x_i)+\Vert {\bf A}{\bf x}- {\bf b}\Vert^2,
\end{equation}	
where ${\bf x} \in \mathbb{R}^{N}$ is the variable,
and  ${\bf A}\in \mathbb{R}^{M\times N}$ and ${\bf b}\in\mathbb{R}^M$ are given.
The function $F$ is defined as
\begin{equation*}
F(t)=
\begin{cases}
   |t|-\eta t^2, &|t|\le \frac{1}{2\eta};\\
   \frac{1}{4\eta}, &|t| > \frac{1}{2\eta},
\end{cases}
\end{equation*}
where $\eta>0$ is a parameter and $F(t)$ is nonconvex and nonsmooth.

By introducing ${\bf y} = {\bf A}{\bf x}$, problem \eqref{exp1} is rewritten as
\begin{equation}\label{exp11}
\begin{array}{ll}
\mbox{minimize}&  \lambda\sum_{i=1}^{N} F( x_i)+\Vert {\bf y}- {\bf b}\Vert^2,\\
\mbox{subject to} & {\bf A}{\bf x}-{\bf y} = {\bf 0},
\end{array}
\end{equation}
where ${\bf x}$ and ${\bf y}$ are variables.
To the best of our knowledge, among the existing nonconvex ADMM algorithms, only the Algorithm $1$ in \cite{Wang} (referred as Ref1 here), the Algorithm $3$ in \cite{jiang2016structured} (referred as Ref2 here), and our algorithm can provide theoretical guarantee for the convergence of this problem.
We will compare the efficiency of these algorithms.

In the experiment, we set $N=1024$, $M=256$, $\lambda = 0.1$, and $\eta = 0.1$. Matrix ${\bf A}$ is a Gaussian random matrix and vector ${\bf b}$ is a Gaussian random vector.
In order to simplify the procedure of choosing parameters, matrix  ${\bf A}$ is normalized by a scalar,
so that the largest eigenvalue of ${\bf A}{\bf A}^{\rm T}$ is $1$.

For our algorithm, the parameters are set according to Theorem \ref{convergence1} as  $\beta = 12$, $L_x = 37$, and $L_y=8$,
and we implement the parallel computation by matrix multiplication in MATLAB.
For the reference algorithms, according to Lemma 7 and Lemma 9 in \cite{Wang} the parameter $\beta$
in Ref1 should be no less than $100$,
so we set it to be $100$, considering that the larger the $\beta$ is, the slower the convergence becomes.
Similarly, according to Theorem 3.18 in \cite{jiang2016structured},  we choose its parameters as $L=2$ and $\beta = 36$ in Ref2.
The stopping criterion of all these methods are set as
\begin{equation}	\label{epsilon}
 \max \big\{\|{\bf x}^{k}-{\bf x}^{k-1}\|,\|{\bf y}^{k}-{\bf y}^{k-1}\|,\|{\bf A} {\bf x}^{k}-{\bf y}^{k}\|\big\} < \epsilon.
\end{equation}	

\captionsetup[table]{name=Table}
\begin{table}[tbp]
\centering  
\caption{Comparison of average CPU running time with parameters chosen by theorems.}
\label{exptab1}
\begin{tabular}{lcc}  
\hline
Algorithm  &$\epsilon=10^{-3}$  &$\epsilon=10^{-4}$ \\ \hline  
Ref1 &$>1000$s &$>1000$s \\         
Ref2 &$162.76$s &$205.21$s \\        
Our Algorithm &${\bf 45.98}s$ &${\bf 60.91}s$ \\ \hline
\end{tabular}
\end{table}

\captionsetup[table]{name=Tabel}
\begin{table}[tbp]
\centering  
\caption{Comparison of average CPU running time with best parameters.}
\label{exptab2}
\begin{tabular}{lccc}  
\hline
Algorithm  &$\epsilon=10^{-3}$  &$\epsilon=10^{-4}$ &$\epsilon=10^{-7}$\\ \hline  
Ref1 &$230.23s$ &$257.84s$ &$ 439.3s$ \\         
Ref2 &$75.92$s &$83.31s$ &$109.57s$ \\        
Our Algorithm &${\bf 12.56}s$ &${\bf 15.23}s$ &${\bf 20.76}s$\\ \hline
\end{tabular}
\end{table}

We perform $1000$ independent trials on MATLAB 2016a with a 3.4 GHz Intel i7 processor,
and the ${\bf A}$ and ${\bf b}$ in each trial is generated randomly.
The average CPU running time is shown in Table \ref{exptab1}. We can see our algorithm enjoys higher time efficiency in
comparison with the other two algorithms.
In fact, the number of iterations of our method is around two times the numbers of iterations of the reference methods,
while their computing time for every iteration is around $7$ times of ours.  This corresponds with the analysis in section \ref{lvsp}.

Considering that the bounds on the parameters are not the tightest in our paper and the two references \cite{Wang,jiang2016structured},
the parameters chosen in the above experiment may not be the best for the three algorithms.
Therefore, we scan the parameters to find the best ones for every algorithm. For our algorithm, the best parameters found are $L_x = 1$, $L_y = 1$, and $\beta = 0.5$. For Ref1, the best parameter is $\beta=9.5$, and for Ref2 the best parameters are $L_y = 2$ and $\beta = 5.5$. We perform $1000$ independent trials with the best parameters again, and the average CPU running time is shown in Table \ref{exptab2}.
We can see that our algorithm still enjoys higher time efficiency in comparison with the other two algorithms.

Define the maximum variable gap as follows
 \begin{equation}	
 \max \big\{\|{\bf x}^{k}-{\bf x}^{k-1}\|,\|{\bf y}^{k}-{\bf y}^{k-1}\|,\|{\bf A} {\bf x}^{k}-{\bf y}^{k}\|\big\}.
 \end{equation}	
The curves of the maximum variable gap during the iterations in one random trial are plotted in Figure \ref{exp1fig0} which displays that our algorithm converges with the fastest speed.
Considering that the objective function in problem \eqref{exp1} is nonconvex and it may have more than one saddle point,
it is interesting to see where the value of objective function converges to,
so we plot its convergence curve in one random trail in Figure \ref{exp1fig1},
where the parameters are set as the same as the ones in the first experiment.
We can see that Ref2 and our algorithm converges to the same saddle point,
while Ref1 converges to another saddle point with a higher objective value.
We repeat the trial for $1000$ times with both the theoretically chosen parameters and the best parameters
and always obverse the same phenomenon.

\begin{figure}[t]
\center
\includegraphics[width=0.5\textwidth]{./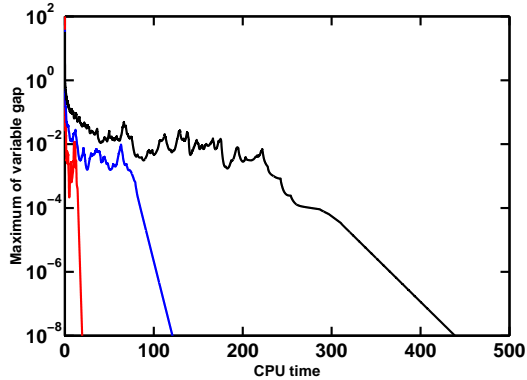}
\caption{Convergence curves of the maximum variable gap.
The red, blue, and black lines are our algorithm, Ref2, and Ref1, respectively. }
\label{exp1fig0}
\end{figure}

\begin{figure}[t]
\center
\includegraphics[width=0.5\textwidth]{./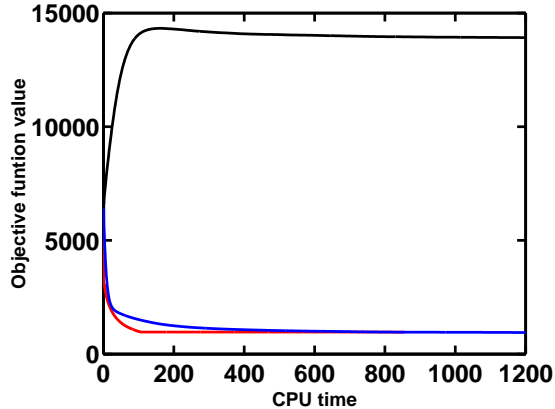}
\caption{Convergence curve of the objective function value.
The red, blue, and black lines are our algorithm, Ref2, and Ref1, respectively. }
\label{exp1fig1}
\end{figure}

\section{Conclusion}
\label{sec:conclusion}
In this work we study linearized ADMM algorithms for nonconvex optimization problems
with nonconvex nonsmooth objective function.
We propose a two-block linearized ADMM algorithm that introduces linearization for both
 the differentiable part in the objective and the augmented term,
and provide theoretical convergence analysis under Assumption \ref{asp1}.
Then we extend it to a multi-block parallel ADMM algorithm which can update coupled variables
in parallel and render subproblems easier to solve,
and the convergence analysis is still applicable.
By arguing that Assumption \ref{asp1} is not only plausible,
but also relatively broad compared with other recent works on ADMM for nonconvex optimization,
we show that the algorithms and their convergence analyses are general enough
to work for many interesting problems such as sparse recovery and integer programming.

\section{Appendix}

In this section, all notations ${\bf x}^k$, ${\bf y}^{k}$, and ${\bm\gamma}^{k}$
refer to the ones in Algorithm \ref{algorithm1} and $L_\beta$ refers to the augmented Lagrangian function defined in \eqref{Lagrangian}.
  
\begin{lemma}\label{subgradient-exist}
Suppose we have a differentiable function $f_1$, a possibly nondifferentiable function $f_2$, and a point ${\bf x}$.
If there exists ${\bf d}_2\in \partial f_2({\bf x})$, then we have
 $${\bf d} = {\bf d}_2-\nabla f_1({\bf x})\in \partial \left(f_2({\bf x})-f_1({\bf x})\right).$$
\end{lemma}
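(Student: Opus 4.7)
The plan is to work directly from the definition of the regular subgradient given in Definition \ref{regular-subgradient} and combine it with the first-order Taylor expansion of the differentiable function $f_1$ at the point ${\bf x}$. This is essentially a "sum rule" statement restricted to the case where one summand is differentiable, so the strategy is to exhibit the required inequality with the required $o(\|\cdot\|)$ error term rather than invoke any deeper calculus of subdifferentials.

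First I would unpack the hypothesis ${\bf d}_2\in\partial f_2({\bf x})$: by Definition \ref{regular-subgradient}, for every point ${\bf x}'$ in a small neighborhood of ${\bf x}$ we have
\begin{equation*}
f_2({\bf x}') \;\geq\; f_2({\bf x}) + \innerprod{{\bf d}_2}{{\bf x}'-{\bf x}} + o(\|{\bf x}'-{\bf x}\|).
\end{equation*}
Next I would use the fact that $f_1$ is differentiable at ${\bf x}$ to write the exact first-order expansion
\begin{equation*}
-f_1({\bf x}') \;=\; -f_1({\bf x}) - \innerprod{\nabla f_1({\bf x})}{{\bf x}'-{\bf x}} + o(\|{\bf x}'-{\bf x}\|),
\end{equation*}
where I absorb the sign into the little-$o$ term since $\pm o(\|{\bf x}'-{\bf x}\|)$ is still $o(\|{\bf x}'-{\bf x}\|)$.

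Adding the two displays term by term, the two $o(\|{\bf x}'-{\bf x}\|)$ quantities sum to another $o(\|{\bf x}'-{\bf x}\|)$, and the linear parts combine to $\innerprod{{\bf d}_2-\nabla f_1({\bf x})}{{\bf x}'-{\bf x}}$, yielding
\begin{equation*}
(f_2 - f_1)({\bf x}') \;\geq\; (f_2 - f_1)({\bf x}) + \innerprod{{\bf d}_2-\nabla f_1({\bf x})}{{\bf x}'-{\bf x}} + o(\|{\bf x}'-{\bf x}\|).
\end{equation*}
Matching this against Definition \ref{regular-subgradient} applied to $f_2-f_1$ at ${\bf x}$ gives exactly ${\bf d}_2-\nabla f_1({\bf x})\in\partial(f_2-f_1)({\bf x})$, which is the claim.

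There is no real obstacle here; the only subtlety worth a line of justification is that the sign flip in the $f_1$ expansion does not spoil the $o(\cdot)$ bookkeeping, and that the neighborhoods on which the two displays hold can be intersected to a common neighborhood of ${\bf x}$. Since $f_1$'s expansion is an equality valid on all of a neighborhood and the subgradient inequality is valid on a neighborhood, the intersection is again a neighborhood of ${\bf x}$, so the combined inequality holds there and the proof is complete.
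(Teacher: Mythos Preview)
Your proposal is correct and follows essentially the same approach as the paper's own proof: both start from the regular-subgradient inequality for $f_2$, write the differentiable expansion of $-f_1$ as an equality with an $o(\|\cdot\|)$ remainder, and add the two displays to obtain the required inequality for $f_2-f_1$. Your added remarks about the sign of the $o(\cdot)$ term and the intersection of neighborhoods are minor clarifications but do not change the structure of the argument.
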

\begin{proof}
Firstly, by the definition of regular subgradient, we have
\begin{equation}	\label{aaa0}
f_2({\bf y}) \geq f_2({\bf x}) + \innerprod{{\bf d}_2}{{\bf y}-{\bf x}}+o(\Vert {\bf y}-{\bf x}\Vert).
\end{equation}	
Secondly, because function $f_1$ is differentiable, we have
\begin{equation}	\label{aaa1}
-f_1({\bf y}) = -f_1({\bf x}) - \innerprod{\nabla f_1({\bf x})}{{\bf y}-{\bf x}}+o(\Vert {\bf y}-{\bf x}\Vert).
\end{equation}	
Adding \eqref{aaa1} to \eqref{aaa0}, we get
 \begin{align*}
f_2({\bf y})-f_1({\bf y})
\geq f_2({\bf x})-f_1({\bf x})  + \innerprod{{\bf d}_2-\nabla f_1({\bf x})}{{\bf y}-{\bf x}}+o(\Vert {\bf y}-{\bf x}\Vert),
\end{align*} 
which together with the definition of regular subgradient leads to the conclusion.
\end{proof}

\begin{lemma}\label{Lipschitz}
	If $h({\bf y})$ is $L_{h}$-Lipschitz differentiable, then
	\begin{equation}
	h( {\bf y}_2)-h( {\bf y}_1)\ge \nabla h({\bf s})\cdot({\bf y}_2-{\bf y}_1)-\frac{L_h}{2}  \Vert{\bf y}_2-{\bf y}_1\Vert^2,
	\end{equation}
	where ${\bf s}$ denotes ${\bf y}_1$ or ${\bf y}_2$.
\end{lemma}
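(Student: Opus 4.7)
The plan is to prove the classical descent-type inequality by reducing it to the fundamental theorem of calculus applied along the straight-line segment between ${\bf y}_1$ and ${\bf y}_2$. Writing ${\bf v} = {\bf y}_2 - {\bf y}_1$, I will use
\[
h({\bf y}_2)-h({\bf y}_1) = \int_0^1 \langle \nabla h({\bf y}_1 + t{\bf v}),\,{\bf v}\rangle\,dt,
\]
which is valid because $h$ is differentiable everywhere (Lipschitz differentiability implies continuous differentiability).

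For the case ${\bf s}={\bf y}_1$, I would add and subtract $\langle \nabla h({\bf y}_1),{\bf v}\rangle$ inside the integrand to obtain
\[
h({\bf y}_2)-h({\bf y}_1) = \langle \nabla h({\bf y}_1),{\bf v}\rangle + \int_0^1 \langle \nabla h({\bf y}_1+t{\bf v})-\nabla h({\bf y}_1),\,{\bf v}\rangle\,dt,
\]
and then bound the integral using Cauchy--Schwarz together with the $L_h$-Lipschitz continuity of $\nabla h$:
\[
\bigl|\langle \nabla h({\bf y}_1+t{\bf v})-\nabla h({\bf y}_1),\,{\bf v}\rangle\bigr| \le L_h\,\|t{\bf v}\|\cdot\|{\bf v}\| = L_h t \|{\bf v}\|^2.
\]
Integrating this pointwise bound over $t\in[0,1]$ yields $\tfrac{L_h}{2}\|{\bf v}\|^2$, and taking the lower bound gives the desired inequality with ${\bf s}={\bf y}_1$.

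For the case ${\bf s}={\bf y}_2$, I would repeat exactly the same argument but interchange the roles of ${\bf y}_1$ and ${\bf y}_2$ (equivalently, expand the integrand around the endpoint ${\bf y}_2$ instead of ${\bf y}_1$), which produces
\[
h({\bf y}_1)-h({\bf y}_2) \ge \langle \nabla h({\bf y}_2),{\bf y}_1-{\bf y}_2\rangle - \tfrac{L_h}{2}\|{\bf y}_1-{\bf y}_2\|^2,
\]
and rearranging yields the claimed bound with ${\bf s}={\bf y}_2$.

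There is no real obstacle here; the lemma is the standard ``descent lemma'' for Lipschitz-gradient functions, and the only care needed is to make the Cauchy--Schwarz step explicit and to handle both choices of ${\bf s}$ by symmetry. The argument is self-contained and relies only on differentiability of $h$ and the Lipschitz property of $\nabla h$ stated in the hypothesis.
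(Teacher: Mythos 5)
Your argument for the case ${\bf s}={\bf y}_1$ is exactly the paper's proof: both write $h({\bf y}_2)-h({\bf y}_1)$ as the line integral of $\nabla h$ along the segment, add and subtract $\langle \nabla h({\bf y}_1),{\bf y}_2-{\bf y}_1\rangle$ in the integrand, and control the remainder by Cauchy--Schwarz together with $\Vert \nabla h({\bf y}_1+t({\bf y}_2-{\bf y}_1))-\nabla h({\bf y}_1)\Vert \le L_h t\Vert {\bf y}_2-{\bf y}_1\Vert$, whose integral over $[0,1]$ gives the factor $\tfrac{L_h}{2}$. That half is correct and matches the paper.

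The case ${\bf s}={\bf y}_2$ as written does not go through, because the two routes you call ``equivalent'' are not. Swapping the roles of ${\bf y}_1$ and ${\bf y}_2$ in the already-proved inequality gives precisely your displayed estimate $h({\bf y}_1)-h({\bf y}_2)\ge \langle \nabla h({\bf y}_2),{\bf y}_1-{\bf y}_2\rangle-\tfrac{L_h}{2}\Vert {\bf y}_1-{\bf y}_2\Vert^2$, but rearranging it (multiplying by $-1$) yields $h({\bf y}_2)-h({\bf y}_1)\le \langle \nabla h({\bf y}_2),{\bf y}_2-{\bf y}_1\rangle+\tfrac{L_h}{2}\Vert {\bf y}_2-{\bf y}_1\Vert^2$: the sign of the quadratic term flips along with the direction of the inequality, so you obtain an upper bound with $+\tfrac{L_h}{2}$ rather than the claimed lower bound with $-\tfrac{L_h}{2}$. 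Symmetry alone cannot produce the ${\bf s}={\bf y}_2$ statement. Your parenthetical alternative is the one that actually works, and it is what the paper does: keep the same integral representation of $h({\bf y}_2)-h({\bf y}_1)$, add and subtract $\langle \nabla h({\bf y}_2),{\bf y}_2-{\bf y}_1\rangle$ instead, and use $\Vert \nabla h({\bf y}_1+t({\bf y}_2-{\bf y}_1))-\nabla h({\bf y}_2)\Vert\le L_h(1-t)\Vert {\bf y}_2-{\bf y}_1\Vert$, whose integral over $[0,1]$ is again $\tfrac{L_h}{2}\Vert {\bf y}_2-{\bf y}_1\Vert^2$. Replace the swap-and-rearrange step with that computation and the proof is complete.
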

  

\begin{proof}
	 \begin{align*}
&	h( {\bf y}_2)-h( {\bf y}_1)\\
=&\int_{0}^{1}\nabla h(t {\bf y}_2 + (1-t) {\bf y}_1)\cdot ({\bf y}_2-{\bf y}_1){\rm d}t\\
	=& \int_{0}^{1} \nabla h({\bf s})\cdot ({\bf y}_2-{\bf y}_1){\rm d}t
	+ \int_{0}^{1} \big(\nabla h(t {\bf y}_2 + (1-t) {\bf y}_1)- \nabla h({\bf s})\big)\cdot({\bf y}_2-{\bf y}_1){\rm d}t,
	\end{align*} 
	where $\nabla h(\cdot)$ defines the gradient of $h(\cdot)$. If we take ${\bf s}={\bf y}_1$, then by inequality
	$$\Vert\nabla h(t {\bf y}_2 + (1-t) {\bf y}_1)- \nabla h({\bf y}_1)\Vert\le L_h \Vert t({\bf y}_2-{\bf y}_1)\Vert $$
	we have
	 \begin{align*}
	&\int_{0}^{1} \nabla h({\bf y}_1)\cdot({\bf y}_2-{\bf y}_1){\rm d}t
	+ \int_{0}^{1} \big(\nabla h(t {\bf y}_2 + (1-t) {\bf y}_1)- \nabla h({\bf y}_1)\big)\cdot({\bf y}_2-{\bf y}_1){\rm d}t\\
	\ge &  \nabla h({\bf y}_1)\cdot({\bf y}_2-{\bf y}_1)-\int_{0}^{1} L_h t \Vert{\bf y}_2-{\bf y}_1\Vert^2{\rm d}t\\
	=&  \nabla h({\bf y}_1)\cdot({\bf y}_2-{\bf y}_1)-\frac{L_h}{2}  \Vert{\bf y}_2-{\bf y}_1\Vert^2.
	\end{align*} 
	Therefore, we get
	$$  h( {\bf y}_2)-h( {\bf y}_1)\ge \nabla h({\bf y}_1)\cdot({\bf y}_2-{\bf y}_1)-\frac{L_h}{2}  \Vert{\bf y}_2-{\bf y}_1\Vert^2.$$
	Similarly, if we take ${\bf s}={\bf y}_2$, we can get
	$$  h( {\bf y}_2)-h( {\bf y}_1)\ge \nabla h({\bf y}_2)\cdot({\bf y}_2-{\bf y}_1)-\frac{L_h}{2}  \Vert{\bf y}_2-{\bf y}_1\Vert^2.$$
\end{proof}
  
\begin{lemma}\label{gammaB}
	Under Assumption \ref{asp1}, for any $l>k$, we have
	\begin{equation*}
	\Vert {\bm\gamma}^{l}-{\bm\gamma}^{k}\Vert^{2}\leq \frac{1}{\lambda_{{\bf B}^{\rm T}{\bf B}}}\Vert {\bf B}^{\rm T}({\bm\gamma}^{l}-{\bm\gamma}^{k})\Vert^{2},
	\end{equation*}
	where $\lambda_{{\bf B}^{\rm T}{\bf B}}$ is the smallest eigenvalue of ${\bf B}^{\rm T}{\bf B}$.
\end{lemma}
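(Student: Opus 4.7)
The plan is to exploit the $\bm\gamma$-update rule together with the inclusion ${\bf Im}({\bf A}) \subset {\bf Im}({\bf B})$ from Assumption \ref{asp1}.4 to confine ${\bm\gamma}^{l}-{\bm\gamma}^{k}$ to ${\bf Im}({\bf B})$, and then use that ${\bf B}$ has full column rank to apply an eigenvalue bound on that subspace. Note that one cannot expect $\Vert{\bf v}\Vert^2 \le \lambda_{{\bf B}^{\rm T}{\bf B}}^{-1}\Vert{\bf B}^{\rm T}{\bf v}\Vert^2$ for an arbitrary ${\bf v}$ when ${\bf B}$ is non-square, so the subspace restriction is essential.

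First, I would write ${\bm\gamma}^{i+1}-{\bm\gamma}^{i}=\beta({\bf A}{\bf x}^{i+1}+{\bf B}{\bf y}^{i+1})$ from the dual update in Algorithm \ref{algorithm1}. Since ${\bf Im}({\bf A})\subset{\bf Im}({\bf B})$, the vector ${\bf A}{\bf x}^{i+1}$ lies in ${\bf Im}({\bf B})$, hence so does ${\bf A}{\bf x}^{i+1}+{\bf B}{\bf y}^{i+1}$. Summing from $i=k$ to $i=l-1$ and using that ${\bf Im}({\bf B})$ is a linear subspace, I conclude ${\bm\gamma}^{l}-{\bm\gamma}^{k}\in{\bf Im}({\bf B})$.

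Next, I would write ${\bm\gamma}^{l}-{\bm\gamma}^{k}={\bf B}{\bf u}$ for some ${\bf u}\in\mathbb{R}^{q}$ (the existence follows from the image condition, and ${\bf u}$ can be taken unique because ${\bf B}$ has full column rank, though uniqueness is not needed). Then
\begin{align*}
\Vert{\bm\gamma}^{l}-{\bm\gamma}^{k}\Vert^{2} = {\bf u}^{\rm T}{\bf B}^{\rm T}{\bf B}{\bf u}, \qquad \Vert{\bf B}^{\rm T}({\bm\gamma}^{l}-{\bm\gamma}^{k})\Vert^{2} = {\bf u}^{\rm T}({\bf B}^{\rm T}{\bf B})^{2}{\bf u}.
\end{align*}
Setting ${\bf M}={\bf B}^{\rm T}{\bf B}\succ{\bf 0}$ with smallest eigenvalue $\lambda_{{\bf B}^{\rm T}{\bf B}}>0$, the spectral inequality ${\bf M}^{2}\succeq \lambda_{{\bf B}^{\rm T}{\bf B}}\,{\bf M}$ (immediate from the eigendecomposition of ${\bf M}$) yields ${\bf u}^{\rm T}{\bf M}^{2}{\bf u}\ge \lambda_{{\bf B}^{\rm T}{\bf B}}\,{\bf u}^{\rm T}{\bf M}{\bf u}$, which is exactly the claimed bound.

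I expect no serious obstacle here; the only point requiring care is recognizing that Assumption \ref{asp1}.4 is the hypothesis that makes the inequality nontrivial, and that the proof uses both parts of Assumption \ref{asp1}.4 (full column rank of ${\bf B}$, so that $\lambda_{{\bf B}^{\rm T}{\bf B}}>0$, and ${\bf Im}({\bf A})\subset{\bf Im}({\bf B})$, so that dual increments live in ${\bf Im}({\bf B})$).
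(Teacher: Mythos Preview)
Your proposal is correct and shares with the paper the essential first step: using the $\bm\gamma$-update together with ${\bf Im}({\bf A})\subset{\bf Im}({\bf B})$ to place ${\bm\gamma}^{l}-{\bm\gamma}^{k}$ in ${\bf Im}({\bf B})$. Where you diverge is in the linear-algebra finish. The paper factors ${\bf B}^{\rm T}={\bf R}{\bf Q}$ with ${\bf R}$ invertible and ${\bf Q}{\bf Q}^{\rm T}={\bf I}$, observes that for vectors in ${\bf Im}({\bf B})={\bf Im}({\bf Q}^{\rm T})$ one has $\Vert{\bm\gamma}^{l}-{\bm\gamma}^{k}\Vert=\Vert{\bf Q}({\bm\gamma}^{l}-{\bm\gamma}^{k})\Vert$, and then bounds $\Vert{\bf R}{\bf Q}({\bm\gamma}^{l}-{\bm\gamma}^{k})\Vert^{2}\ge\lambda_{{\bf R}^{\rm T}{\bf R}}\Vert{\bf Q}({\bm\gamma}^{l}-{\bm\gamma}^{k})\Vert^{2}$, closing with $\lambda_{{\bf R}^{\rm T}{\bf R}}=\lambda_{{\bf R}{\bf R}^{\rm T}}=\lambda_{{\bf B}^{\rm T}{\bf B}}$. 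Your route instead writes ${\bm\gamma}^{l}-{\bm\gamma}^{k}={\bf B}{\bf u}$ directly and applies the one-line spectral inequality ${\bf M}^{2}\succeq\lambda_{{\bf B}^{\rm T}{\bf B}}{\bf M}$ for ${\bf M}={\bf B}^{\rm T}{\bf B}$. Your argument is shorter and avoids the auxiliary QR factorization; the paper's version makes the role of the orthonormal basis of ${\bf Im}({\bf B})$ explicit. Either way the content is the same, and your identification of both halves of Assumption~\ref{asp1}.4 as necessary is exactly right.
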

  
\begin{proof}
	By the ${\bm\gamma}$-updating rule and the assumption ${\bf Im}({\bf A})\subset {\bf Im}({\bf B})$,
	for two integers $l>k$, we have
	\begin{equation*}
	{\bm\gamma}^{l}-{\bm\gamma}^{k}=\sum_{i=k+1}^{l}\beta({\bf A}{\bf x}^{i}+{\bf B}{\bf y}^{i})\in {\bf Im}({\bf B}).
	\end{equation*}
	Because ${\bf B}\in \mathbb{R}^{n\times q}$ has full column rank, there exists ${\bf R}\in \mathbb{R}^{q\times q}, {\bf Q}\in \mathbb{R}^{q\times n}$ such that
	${\bf R}$ is invertible, ${\bf Q}{\bf Q}^{\rm T}={\bf I}_{n\times n}$, and ${\bf B}^{\rm T}={\bf R}{\bf Q}$.
	Noticing that ${\bf Im}({\bf B})={\bf Im}({\bf Q}^{\rm T})$, we get ${\bm\gamma}^{l}-{\bm\gamma}^{k}\in{\bf Im}({\bf Q}^{\rm T})$.
	Thus, $\Vert {\bm\gamma}^{l}-{\bm\gamma}^{k}\Vert^{2}=\Vert {\bf Q}({\bm\gamma}^{l}-{\bm\gamma}^{k})\Vert^{2}$.
	Consequently, we have
	 \begin{align*}
	\Vert {\bf B}^{\rm T}({\bm\gamma}^{l}-{\bm\gamma}^{k})\Vert^{2} &= \Vert {\bf R}{\bf Q}({\bm\gamma}^{l}-{\bm\gamma}^{k})\Vert^{2}\\
	&\geq \lambda_{{\bf R}^{\rm T}{\bf R}}\Vert {\bf Q}({\bm\gamma}^{l}-{\bm\gamma}^{k})\Vert^{2}\\
	&=\lambda_{{\bf R}^{\rm T}{\bf R}}\Vert {\bm\gamma}^{l}-{\bm\gamma}^{k}\Vert^{2},
	\end{align*} 
	where $\lambda_{{\bf R}^{\rm T}{\bf R}}$ denotes the minimum eigenvalue of ${\bf R}^{\rm T}{\bf R}$.
	
	By the definition of ${\bf R}$ and ${\bf Q}$, we have $\lambda_{{\bf B}^{\rm T}{\bf B}}=\lambda_{{\bf R}{\bf R}^{\rm T}}$. Together with the common conclusion in linear algebra $\lambda_{{\bf R}^{\rm T}{\bf R}}=\lambda_{{\bf R}{\bf R}^{\rm T}}$, we get $\lambda_{{\bf R}^{\rm T}{\bf R}}=\lambda_{{\bf B}^{\rm T}{\bf B}}$, which completes the proof.
\end{proof}
  

\begin{lemma}\label{gammaUpdate}
	Under Assumption \ref{asp1}, the following equality holds for ${\bm\gamma}^{k+1}$, ${\bf y}^{k}$, and ${\bf y}^{k+1}$
	\begin{equation*}
	{\bf B}^{\rm T}{\bm\gamma}^{k+1}=-\nabla_{\bf y} g({\bf x}^{k+1},{\bf y}^{k})-\nabla h({\bf y}^{k})-L_{y} ({\bf y}^{k+1}-{\bf y}^{k}).
	\end{equation*}
\end{lemma}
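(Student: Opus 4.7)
The plan is to recognize that $\bar{h}^k({\bf y})$ is a smooth (in fact, strongly convex) quadratic function of ${\bf y}$, so its unique minimizer ${\bf y}^{k+1}$ must satisfy the first-order optimality condition $\nabla \bar{h}^k({\bf y}^{k+1}) = {\bf 0}$. Combining this with the explicit form of the ${\bm\gamma}$-update rule then produces the desired identity essentially by substitution.

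First, I would differentiate the auxiliary function $\bar{h}^k$ given in \eqref{eq-twoblock-y-update} with respect to ${\bf y}$. The term $\langle {\bm\gamma}^k, {\bf B}{\bf y}\rangle$ contributes ${\bf B}^{\rm T}{\bm\gamma}^k$; the proximal term $\frac{L_y}{2}\|{\bf y}-{\bf y}^k\|^2$ contributes $L_y({\bf y}-{\bf y}^k)$; the augmented term $\frac{\beta}{2}\|{\bf A}{\bf x}^{k+1}+{\bf B}{\bf y}\|^2$ contributes $\beta {\bf B}^{\rm T}({\bf A}{\bf x}^{k+1}+{\bf B}{\bf y})$; and the linearized inner product contributes $\nabla_{\bf y} g({\bf x}^{k+1},{\bf y}^k) + \nabla h({\bf y}^k)$, which is constant in ${\bf y}$. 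Evaluating at ${\bf y} = {\bf y}^{k+1}$ and setting the sum equal to zero gives
\begin{equation*}
{\bf B}^{\rm T}{\bm\gamma}^{k} + L_y({\bf y}^{k+1}-{\bf y}^k) + \beta {\bf B}^{\rm T}({\bf A}{\bf x}^{k+1}+{\bf B}{\bf y}^{k+1}) + \nabla_{\bf y} g({\bf x}^{k+1},{\bf y}^k) + \nabla h({\bf y}^k) = {\bf 0}.
\end{equation*}

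Second, I would recall the ${\bm\gamma}$-update rule ${\bm\gamma}^{k+1} = {\bm\gamma}^k + \beta({\bf A}{\bf x}^{k+1}+{\bf B}{\bf y}^{k+1})$, multiply it from the left by ${\bf B}^{\rm T}$, and observe that the first and third terms in the display above collapse to ${\bf B}^{\rm T}{\bm\gamma}^{k+1}$. Rearranging then gives the claimed identity.

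There is no real obstacle here, since the lemma is essentially an algebraic consequence of the two update rules; the only thing to be careful about is to differentiate $\bar{h}^k$ correctly (keeping the arguments ${\bf x}^{k+1}$ and ${\bf y}^k$ fixed when computing $\nabla_{\bf y}$), and to note that the minimum is actually attained and unique because the Hessian $L_y {\bf I} + \beta {\bf B}^{\rm T}{\bf B}$ is positive definite under Assumption \ref{asp1} (since ${\bf B}$ has full column rank and $L_y>0$).
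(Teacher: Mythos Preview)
Your proposal is correct and follows essentially the same approach as the paper's own proof: differentiate $\bar{h}^k$, invoke the first-order optimality condition at ${\bf y}^{k+1}$, and then substitute the ${\bm\gamma}$-update rule (after left-multiplying by ${\bf B}^{\rm T}$) to collapse the terms. The only addition you make is the remark on uniqueness of the minimizer via positive definiteness of $L_y{\bf I}+\beta{\bf B}^{\rm T}{\bf B}$, which is consistent with the paper but not explicitly stated there.
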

  
\begin{proof}
	By calculating the derivative of $\bar{h}^k({\bf y})$ defined in \eqref{eq-twoblock-y-update}, we have
	 \begin{align*}
	\nabla\bar{h}^k({\bf y})=\nabla_{\bf y} g({\bf x}^{k+1},{\bf y}^{k})+\nabla h({\bf y}^{k})+L_{y}({\bf y}-{\bf y}^{k})
	+{\bf B}^{\rm T}{\bm\gamma}^{k}+\beta {\bf B}^{\rm T}({\bf A}{\bf x}^{k+1}+{\bf B}{\bf y}).
	\end{align*} 
	Plug ${\bf y}={\bf y}^{k+1}$ into it,
	and by the ${\bf y}$-updating rule we have
	\begin{equation}	\label{Eq7}
	 {\bf B}^{\rm T}{\bm\gamma}^{k}+\beta {\bf B}^{\rm T}({\bf A}{\bf x}^{k+1}+{\bf B}{\bf y}^{k+1})
	=-\nabla_{\bf y} g({\bf x}^{k+1},{\bf y}^{k})-\nabla h({\bf y}^{k})-L_{y}({\bf y}^{k+1}-{\bf y}^{k}).
	\end{equation}
	Besides, by the ${\bm\gamma}$-updating rule, we have
	\begin{equation}\label{Eq8}
	{\bf B}^{\rm T}{\bm\gamma}^{k+1} = {\bf B}^{\rm T}{\bm\gamma}^{k}+\beta {\bf B}^{\rm T}({\bf A}{\bf x}^{k+1}+{\bf B}{\bf y}^{k+1}).
	\end{equation}
	By replacing the RHS of \eqref{Eq8} with \eqref{Eq7}, we get
	\begin{equation*}
	{\bf B}^{\rm T}{\bm\gamma}^{k+1}=-\nabla_{\bf y} g({\bf x}^{k+1},{\bf y}^{k})-\nabla h({\bf y}^{k})-L_{y} ({\bf y}^{k+1}-{\bf y}^{k}).
	\end{equation*}
\end{proof}

Lemma \ref{gammaUpdate} provides a way to express ${\bm\gamma}^{k+1}$ using ${\bf y}^{k}$ and ${\bf y}^{k+1}$,
which is a technique widely used in the convergence proof for nonconvex ADMM algorithms \cite{Wang,Hong}.

Now we are ready to prove Theorem \ref{convergence1}.
We first give bounds on the descent or ascent of the Lagrangian function \eqref{L1}
after every update
by using the quadratic form of the primal residual.
Specifically, in the following,
Lemma \ref{x-update} presents that the descent of $L_{\beta}$ is lower bounded after the ${\bf x}$-updating step,
Lemma \ref{y-update} shows that the descent of $L_{\beta}$ is lower bounded after the ${\bf y}$-updating step,
and Lemma \ref{gamma-update} demonstrates that the ascent of $L_{\beta}$ is upper bounded after the ${\bm\gamma}$-updating step.
  
\begin{lemma}\label{x-update}
	Under Assumption \ref{asp1}, the following inequality holds for the update of ${\bf x}$
	\begin{equation*}
	L_{\beta}({\bf x}^{k},{\bf y}^{k},{\bm\gamma}^{k})-L_{\beta}({\bf x}^{k+1},{\bf y}^{k},{\bm\gamma}^{k}) \ge C_0 \Vert {\bf x}^{k+1}-{\bf x}^{k}\Vert^{2},
	\end{equation*}
	where $C_0 = \frac{L_{x}-L_{g}-\beta L_{\bf A}}{2}$ and $L_{\bf A}$ denotes the largest singular value of ${\bf A}^{\rm T}{\bf A}$.
\end{lemma}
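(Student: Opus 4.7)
The plan is to show that the descent of $L_\beta$ from the ${\bf x}$-update can be decomposed into three pieces: the change coming from $g({\bf x},{\bf y}^k)$, the change coming from the augmented quadratic $\tfrac{\beta}{2}\|{\bf A}{\bf x}+{\bf B}{\bf y}^k\|^{2}$, and the change coming from $f({\bf x})+\langle{\bm\gamma}^k,{\bf A}{\bf x}\rangle$. The first two pieces are handled by exploiting the Lipschitz differentiability of $g$ in ${\bf x}$ and by expanding the norm squared, while the last piece is controlled through the optimality of ${\bf x}^{k+1}$ for the auxiliary function $\bar f^k$. When combined, the linear cross terms created by the linearization cancel exactly with those produced by the Lipschitz and norm-expansion inequalities, leaving the advertised quadratic lower bound.

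First I would write out $L_\beta({\bf x}^k,{\bf y}^k,{\bm\gamma}^k)-L_\beta({\bf x}^{k+1},{\bf y}^k,{\bm\gamma}^k)$ term by term and apply Lemma \ref{Lipschitz} to $g(\cdot,{\bf y}^k)$ (with ${\bf s}={\bf x}^k$) to obtain
\[
g({\bf x}^k,{\bf y}^k)-g({\bf x}^{k+1},{\bf y}^k)\ge -\langle\nabla_{\bf x}g({\bf x}^k,{\bf y}^k),{\bf x}^{k+1}-{\bf x}^k\rangle-\tfrac{L_g}{2}\|{\bf x}^{k+1}-{\bf x}^k\|^2.
\]
Next I would use the identity $\|{\bf u}\|^2-\|{\bf v}\|^2=-2\langle{\bf u},{\bf v}-{\bf u}\rangle-\|{\bf v}-{\bf u}\|^2$ with ${\bf u}={\bf A}{\bf x}^k+{\bf B}{\bf y}^k$ and ${\bf v}={\bf A}{\bf x}^{k+1}+{\bf B}{\bf y}^k$, together with $\|{\bf A}({\bf x}^{k+1}-{\bf x}^k)\|^2\le L_{\bf A}\|{\bf x}^{k+1}-{\bf x}^k\|^2$, to obtain
\[
\tfrac{\beta}{2}\bigl(\|{\bf A}{\bf x}^k+{\bf B}{\bf y}^k\|^2-\|{\bf A}{\bf x}^{k+1}+{\bf B}{\bf y}^k\|^2\bigr)\ge -\beta\langle{\bf A}^{\rm T}({\bf A}{\bf x}^k+{\bf B}{\bf y}^k),{\bf x}^{k+1}-{\bf x}^k\rangle-\tfrac{\beta L_{\bf A}}{2}\|{\bf x}^{k+1}-{\bf x}^k\|^2.
\]

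The third ingredient is the inequality $\bar f^k({\bf x}^{k+1})\le\bar f^k({\bf x}^k)$, which holds because ${\bf x}^{k+1}$ is a minimizer of $\bar f^k$ (no smoothness of $f$ required). Using the explicit form of $\bar f^k$ in \eqref{eq-twoblock-x-update} this rearranges to
\[
f({\bf x}^k)-f({\bf x}^{k+1})\ge\langle{\bm\gamma}^k,{\bf A}({\bf x}^{k+1}-{\bf x}^k)\rangle+\tfrac{L_x}{2}\|{\bf x}^{k+1}-{\bf x}^k\|^2+\langle{\bf x}^{k+1}-{\bf x}^k,\nabla_{\bf x}g({\bf x}^k,{\bf y}^k)+\beta{\bf A}^{\rm T}({\bf A}{\bf x}^k+{\bf B}{\bf y}^k)\rangle.
\]

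Finally I would add the three inequalities into the term-by-term expression of $L_\beta({\bf x}^k,{\bf y}^k,{\bm\gamma}^k)-L_\beta({\bf x}^{k+1},{\bf y}^k,{\bm\gamma}^k)$. The two $\langle\nabla_{\bf x}g({\bf x}^k,{\bf y}^k),{\bf x}^{k+1}-{\bf x}^k\rangle$ terms cancel, the two $\langle{\bm\gamma}^k,{\bf A}({\bf x}^{k+1}-{\bf x}^k)\rangle$ terms cancel, and the two $\beta\langle{\bf A}^{\rm T}({\bf A}{\bf x}^k+{\bf B}{\bf y}^k),{\bf x}^{k+1}-{\bf x}^k\rangle$ terms cancel, leaving only the quadratic coefficient $\tfrac{L_x-L_g-\beta L_{\bf A}}{2}=C_0$ multiplying $\|{\bf x}^{k+1}-{\bf x}^k\|^2$. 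The only real obstacle is bookkeeping: making sure every linear cross term introduced by the linearization is matched against exactly one term from either the Lipschitz inequality or the quadratic expansion so the cancellation is clean; this is the reason the regularization weight $L_x$ must absorb both $L_g$ (from $g$) and $\beta L_{\bf A}$ (from the augmented term), which is exactly what the definition of $C_0$ records.
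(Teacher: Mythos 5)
Your proposal is correct and follows essentially the same route as the paper's proof: both rest on the minimizer inequality $\bar f^k({\bf x}^{k+1})\le\bar f^k({\bf x}^k)$, the exact expansion of the augmented quadratic, and Lemma \ref{Lipschitz} applied to $g(\cdot,{\bf y}^k)$, with the same cross-term cancellations producing $C_0=\tfrac{L_x-L_g-\beta L_{\bf A}}{2}$. The only difference is presentational (you sum three separate inequalities, the paper chains the substitutions), so there is nothing to add.
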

  
\begin{proof}
	By ${\bf x}$-updating rule in Algorithm \ref{algorithm1}, we have
	\begin{equation}\label{a1}
	\bar{f}^k({\bf x}^{k})\ge \bar{f}^k({\bf x}^{k+1}).
	\end{equation}
	Plugging the definition of $\bar{f}^k$ in \eqref{eq-twoblock-x-update} into \eqref{a1}, we get
	 \begin{align}\label{a2}
	\nonumber &f({\bf x}^{k})-f({\bf x}^{k+1})+ \langle {\bf x}^{k}-{\bf x}^{k+1},\beta {\bf A}^{\rm T}({\bf A}{\bf x}^{k}+{\bf B}{\bf y}^{k})+{\bf A}^{\rm T}{\bm\gamma}^{k}\rangle \\
	& \ge         \langle {\bf x}^{k+1}-{\bf x}^{k},  \nabla_{\bf x} g({\bf x}^{k},{\bf y}^{k}) \rangle    + \frac{L_{x}}{2}\Vert {\bf x}^{k+1}-{\bf x}^{k}\Vert^{2}.
	\end{align} 
	Then we have
	 \begin{align*}
	\nonumber &L_{\beta}({\bf x}^{k},{\bf y}^{k},{\bm\gamma}^{k})- L_{\beta}({\bf x}^{k+1},{\bf y}^{k},{\bm\gamma}^{k})\\
	\nonumber  =&f({\bf x}^{k})+g({\bf x}^{k},{\bf y}^{k})-f({\bf x}^{k+1})-g({\bf x}^{k+1},{\bf y}^{k})\\
	\nonumber &+\langle{\bf \gamma}^k,{\bf A}{\bf x}^{k}-{\bf A}{\bf x}^{k+1} \rangle+\frac{\beta}{2} \Vert {\bf A}{\bf x}^{k}+{\bf B}{\bf y}^{k} \Vert ^{2}-\frac{\beta}{2} \Vert {\bf A}{\bf x}^{k+1}+{\bf B}{\bf y}^{k} \Vert ^{2}\\
	\nonumber =&f({\bf x}^{k})+g({\bf x}^{k},{\bf y}^{k})-f({\bf x}^{k+1})-g({\bf x}^{k+1},{\bf y}^{k})+ \\
	\nonumber &\langle {\bf x}^{k}-{\bf x}^{k+1},{\bf A}^{\rm T}{\bm\gamma}^{k}\rangle+ \langle {\bf x}^{k}-{\bf x}^{k+1},\beta {\bf A}^{\rm T}({\bf A}{\bf x}^{k}+{\bf B}{\bf y}^{k})\rangle-\frac{\beta}{2} \Vert {\bf A}({\bf x}^{k+1}-{\bf x}^{k})\Vert ^{2}\\
	\nonumber \ge&g({\bf x}^{k},{\bf y}^{k})-g({\bf x}^{k+1},{\bf y}^{k})+\langle {\bf x}^{k+1}-{\bf x}^{k},  \nabla_{\bf x} g({\bf x}^{k},{\bf y}^{k}) \rangle\\
&+\frac{L_{x}}{2}\Vert {\bf x}^{k+1}-{\bf x}^{k}\Vert^{2}-\frac{\beta}{2} \Vert {\bf A}({\bf x}^{k+1}-{\bf x}^{k})\Vert ^{2}\\
	\ge&  \frac{L_{x}-L_{g}-\beta L_{\bf A}}{2}\Vert {\bf x}^{k+1}-{\bf x}^{k}\Vert^{2},
	\end{align*} 
	where the last inequality is from Lemma \ref{Lipschitz} and  $L_{\bf A}$ denotes the largest singular value of ${\bf A}^{\rm T}{\bf A}$.
\end{proof}

\begin{lemma}\label{y-update}
	Under Assumption \ref{asp1}, the following inequality holds for the update of ${\bf y}$
	$$
	L_{\beta}({\bf x}^{k+1},{\bf y}^{k},{\bm\gamma}^{k})- L_{\beta}({\bf x}^{k+1},{\bf y}^{k+1},{\bm\gamma}^{k})
	\geq C_1\Vert {\bf y}^{k}-{\bf y}^{k+1}\Vert^{2},
	$$
	\ where $C_1 = \frac{2L_y - L_\omega}{2}$ and $L_\omega = L_g + L_h$.
\end{lemma}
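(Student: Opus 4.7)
The plan is to mirror the structure of Lemma \ref{x-update}, but exploit the fact that $g(\mathbf{x}^{k+1},\cdot)$ and $h$ are both Lipschitz differentiable (and therefore Lemma \ref{Lipschitz} applies to both), and that the auxiliary objective $\bar h^k$ is strongly convex with modulus $L_y$. First I would expand
$$L_\beta(\mathbf{x}^{k+1},\mathbf{y}^k,\bm\gamma^k)-L_\beta(\mathbf{x}^{k+1},\mathbf{y}^{k+1},\bm\gamma^k)$$
directly from the definition \eqref{Lagrangian}, so that it splits into three groups: the objective part $g(\mathbf{x}^{k+1},\mathbf{y}^k)-g(\mathbf{x}^{k+1},\mathbf{y}^{k+1})+h(\mathbf{y}^k)-h(\mathbf{y}^{k+1})$, the Lagrangian multiplier term $\langle\bm\gamma^k,\mathbf{B}(\mathbf{y}^k-\mathbf{y}^{k+1})\rangle$, and the augmented quadratic term $\tfrac{\beta}{2}\|\mathbf{A}\mathbf{x}^{k+1}+\mathbf{B}\mathbf{y}^k\|^2-\tfrac{\beta}{2}\|\mathbf{A}\mathbf{x}^{k+1}+\mathbf{B}\mathbf{y}^{k+1}\|^2$.

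Next I would exploit the $\mathbf{y}$-update. Observe from \eqref{eq-twoblock-y-update} that $\bar h^k$ equals a linear function of $\mathbf{y}$ plus $\tfrac{L_y}{2}\|\mathbf{y}-\mathbf{y}^k\|^2+\tfrac{\beta}{2}\|\mathbf{A}\mathbf{x}^{k+1}+\mathbf{B}\mathbf{y}\|^2$; the first quadratic makes $\bar h^k$ strongly convex in $\mathbf{y}$ with modulus at least $L_y$, and the second is convex. Since $\mathbf{y}^{k+1}$ is the (unconstrained) minimizer of $\bar h^k$, strong convexity gives
$$\bar h^k(\mathbf{y}^k)-\bar h^k(\mathbf{y}^{k+1})\ge \tfrac{L_y}{2}\|\mathbf{y}^{k+1}-\mathbf{y}^k\|^2.$$
Substituting the explicit form of $\bar h^k$ into this inequality and rearranging yields
$$\langle\bm\gamma^k,\mathbf{B}(\mathbf{y}^k-\mathbf{y}^{k+1})\rangle+\tfrac{\beta}{2}\|\mathbf{A}\mathbf{x}^{k+1}+\mathbf{B}\mathbf{y}^k\|^2-\tfrac{\beta}{2}\|\mathbf{A}\mathbf{x}^{k+1}+\mathbf{B}\mathbf{y}^{k+1}\|^2\ge L_y\|\mathbf{y}^{k+1}-\mathbf{y}^k\|^2+\langle\mathbf{y}^{k+1}-\mathbf{y}^k,\nabla_{\mathbf{y}}g(\mathbf{x}^{k+1},\mathbf{y}^k)+\nabla h(\mathbf{y}^k)\rangle.$$

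Then I would apply Lemma \ref{Lipschitz} twice, once to $g(\mathbf{x}^{k+1},\cdot)$ with anchor point $\mathbf{y}^k$ and once to $h$ with anchor point $\mathbf{y}^k$, to get
$$g(\mathbf{x}^{k+1},\mathbf{y}^k)-g(\mathbf{x}^{k+1},\mathbf{y}^{k+1})+h(\mathbf{y}^k)-h(\mathbf{y}^{k+1})\ge -\langle\nabla_{\mathbf{y}}g(\mathbf{x}^{k+1},\mathbf{y}^k)+\nabla h(\mathbf{y}^k),\mathbf{y}^{k+1}-\mathbf{y}^k\rangle-\tfrac{L_g+L_h}{2}\|\mathbf{y}^{k+1}-\mathbf{y}^k\|^2.$$
Adding this to the previous inequality, the cross inner-product terms cancel exactly, leaving $\left(L_y-\tfrac{L_\omega}{2}\right)\|\mathbf{y}^{k+1}-\mathbf{y}^k\|^2=C_1\|\mathbf{y}^{k+1}-\mathbf{y}^k\|^2$, which is the desired bound.

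The main subtlety, which distinguishes this proof from the one for the $\mathbf{x}$-update, is that one must use the strong-convexity estimate $\bar h^k(\mathbf{y}^k)-\bar h^k(\mathbf{y}^{k+1})\ge \tfrac{L_y}{2}\|\mathbf{y}^{k+1}-\mathbf{y}^k\|^2$ rather than the weaker optimality $\bar h^k(\mathbf{y}^k)\ge\bar h^k(\mathbf{y}^{k+1})$; without the strong-convexity factor, one would only obtain the constant $\tfrac{L_y-L_\omega}{2}$, short of the claimed $\tfrac{2L_y-L_\omega}{2}$. This is legitimate here because, unlike $f$ in Lemma \ref{x-update} (which may be nonconvex and nonsmooth), the non-proximal, non-quadratic portion of $\bar h^k$ is purely linear in $\mathbf{y}$, so the proximal quadratic alone supplies strong convexity of modulus $L_y$.
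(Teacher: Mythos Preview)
Your proposal is correct and follows essentially the same approach as the paper's proof: both use the $L_y$-strong convexity of $\bar h^k$ (the paper phrases it as ``$\bar h^k$ is $L_y$-convex'' and invokes the minimizer condition $\nabla\bar h^k(\mathbf{y}^{k+1})=0$) together with the Lipschitz-differentiability bound of Lemma~\ref{Lipschitz} applied to $w^{k+1}(\mathbf{y})=g(\mathbf{x}^{k+1},\mathbf{y})+h(\mathbf{y})$, and then combine them so that the inner-product terms cancel. The only cosmetic difference is the order of assembly---the paper first bounds the $L_\beta$ difference by the Lipschitz estimate, recognizes the result as $\bar h^k(\mathbf{y}^k)-\bar h^k(\mathbf{y}^{k+1})+\tfrac{L_y-L_\omega}{2}\|\mathbf{y}^k-\mathbf{y}^{k+1}\|^2$, and then applies strong convexity, whereas you rearrange the strong-convexity inequality first---and the paper treats $g+h$ as a single $L_\omega$-Lipschitz function rather than applying Lemma~\ref{Lipschitz} twice.
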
  
\begin{proof}
	
	According to that $\bar{h}^k({\bf y})$ is $L_{y}$-convex, by Proposition 4.8 in \cite{Weakly_convex} we have
	 \begin{align*}
	\bar{h}^k({\bf y}^{k})
	 \geq \bar{h}^k({\bf y}^{k+1})+\left\langle {\bf y}^{k}-{\bf y}^{k+1},\nabla\bar{h}({\bf y}^{k+1})\right\rangle
	+\frac{L_{y}}{2}\Vert {\bf y}^{k}-{\bf y}^{k+1}\Vert^{2}.
	\end{align*} 
	According to the updating rule of $\bf y$, i.e., $\nabla\bar{h}^k({\bf y}^{k+1})={\bf 0}$,
	the above inequality is reshaped to
	\begin{equation}\label{Eq3}
	\bar{h}^k({\bf y}^{k})\geq \bar{h}^k({\bf y}^{k+1})+\frac{L_{y}}{2}\Vert {\bf y}^{k}-{\bf y}^{k+1}\Vert^{2}.
	\end{equation}
	Denote $w^{k}({\bf y}) = g({\bf x}^{k},{\bf y})+ h({\bf y})$ and  recall that $g({\bf x},{\bf y})$ and  $h({\bf y})$ are $L_g$ and $L_h$ Lipschitz-differentiable, respectively. We get that $w^k ({\bf y})$ is $L_w $  Lipschitz-differentiable, where $L_w = L_g+L_h$.  Then by Lemma \ref{Lipschitz} we have
	\begin{equation}\label{Eq4}
	 w^{k+1}({\bf y}^{k})
	\geq w^{k+1}({\bf y}^{k+1})
	+\left\langle {\bf y}^{k}-{\bf y}^{k+1},\nabla w^{k+1}({\bf y}^{k})\right\rangle
	-\frac{L_{w}}{2}\Vert {\bf y}^{k}-{\bf y}^{k+1}\Vert^{2}.
	\end{equation}
	Now we consider the descent of $L_\beta$ in $\bf y$-updating step.
	 \begin{align}
	& L_{\beta}({\bf x}^{k+1},{\bf y}^{k},{\bm\gamma}^{k})-L_{\beta}({\bf x}^{k+1},{\bf y}^{k+1},{\bm\gamma}^{k}) \nonumber\\
	= & \ w^{k+1}({\bf y}^{k})-w^{k+1}({\bf y}^{k+1})+\left\langle {\bm\gamma}^{k},{\bf B}({\bf y}^{k}-{\bf y}^{k+1}) \right\rangle\\&
	+\frac{\beta}{2}\Vert {\bf A}{\bf x}^{k+1}+{\bf B}{\bf y}^{k}\Vert^{2}-\frac{\beta}{2}\Vert {\bf A}{\bf x}^{k+1}+{\bf B}{\bf y}^{k+1}\Vert^{2}. \label{eq-y-update-1}
	\end{align} 
	By plugging \eqref{Eq4} into \eqref{eq-y-update-1}, we have
	 \begin{align}
	{\rm RHS \, of \, } \eqref{eq-y-update-1} \geq& \left\langle {\bf y}^{k}-{\bf y}^{k+1},\nabla w^{k+1}({\bf y}^{k})\right\rangle-\frac{L_{w}}{2}\Vert {\bf y}^{k}-{\bf y}^{k+1}\Vert^{2}\nonumber\\
	&+\langle {\bm\gamma}^{k},{\bf B}({\bf y}^{k}-{\bf y}^{k+1}) \rangle +\frac{\beta}{2}\Vert {\bf A}{\bf x}^{k+1}+{\bf B}{\bf y}^{k}\Vert^{2}
	-\frac{\beta}{2}\Vert {\bf A}{\bf x}^{k+1}+{\bf B}{\bf y}^{k+1}\Vert^{2}.\label{eq-y-update-A1}
	\end{align} 
	By the definition of $\bar{h}^k({\bf y})$ in \eqref{eq-twoblock-y-update}, we further derive
	\begin{equation}
	{\rm RHS \, of \, } \eqref{eq-y-update-A1}  = \bar{h}^k({\bf y}^{k})- \bar{h}^k({\bf y}^{k+1})+\frac{L_{y}-L_{w}}{2}\Vert {\bf y}^{k}-{\bf y}^{k+1}\Vert^{2}. \label{eq-y-update-2}
	\end{equation}
	By inserting \eqref{Eq3} into \eqref{eq-y-update-2}, we finally reach
	$$
	L_{\beta}({\bf x}^{k+1},{\bf y}^{k},{\bm\gamma}^{k})-L_{\beta}({\bf x}^{k+1},{\bf y}^{k+1},{\bm\gamma}^{k}) \geq C_1\Vert {\bf y}^{k}-{\bf y}^{k+1}\Vert^{2},
	$$
	where
	\begin{equation*}
	C_1 := \frac{2L_{y}-L_{w}}{2}\label{eq-define-C0}.
	\end{equation*}
\end{proof}

\begin{lemma}\label{gamma-update}
	Under Assumption \ref{asp1}, the following inequality holds for the update of ${\bm\gamma}$
	 \begin{align}
	\nonumber &L_{\beta}({\bf x}^{k+1},{\bf y}^{k+1},{\bm\gamma}^{k+1})-L_{\beta}({\bf x}^{k+1},{\bf y}^{k+1},{\bm\gamma}^{k})\\
	=&  \frac{1}{\beta}\Vert {\bm\gamma}^{k+1}-{\bm\gamma}^{k}\Vert^{2} \nonumber \\
	\leq & C_2\Vert {\bf x}^{k+1}-{\bf x}^{k}\Vert^{2} +C_3\Vert {\bf y}^{k+1}-{\bf y}^{k} \Vert^{2}+C_4\Vert {\bf y}^{k}-{\bf y}^{k-1}\Vert^{2},\label{eq00}
	\end{align} 
	where $L_w = L_g+L_h$, $C_2= \frac{3L^{2}_{w}}{\beta \lambda_{{\bf B}^{\rm T}{\bf B}}}$, $C_3=\frac{3L_{y}^2 }{\beta \lambda_{{\bf B}^{\rm T}{\bf B}}}$ and $C_4=\frac{3(L^{2}_{w}+L_{y}^2 )}{\beta \lambda_{{\bf B}^{\rm T}{\bf B}}}$.
\end{lemma}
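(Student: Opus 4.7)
The plan is to prove the equality and the bound in three stages. The equality part is essentially free: $L_\beta$ depends linearly on ${\bm\gamma}$ only through the inner product $\innerprod{{\bm\gamma}}{{\bf A}{\bf x}+{\bf B}{\bf y}}$, so
$L_{\beta}({\bf x}^{k+1},{\bf y}^{k+1},{\bm\gamma}^{k+1})-L_{\beta}({\bf x}^{k+1},{\bf y}^{k+1},{\bm\gamma}^{k})
= \innerprod{{\bm\gamma}^{k+1}-{\bm\gamma}^{k}}{{\bf A}{\bf x}^{k+1}+{\bf B}{\bf y}^{k+1}}$, and the ${\bm\gamma}$-update rule identifies ${\bf A}{\bf x}^{k+1}+{\bf B}{\bf y}^{k+1}=\frac{1}{\beta}({\bm\gamma}^{k+1}-{\bm\gamma}^{k})$, yielding $\frac{1}{\beta}\|{\bm\gamma}^{k+1}-{\bm\gamma}^{k}\|^{2}$. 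Then, since Assumption \ref{asp1} gives ${\bf Im}({\bf A})\subset{\bf Im}({\bf B})$ and the ${\bm\gamma}$-updates take values in the image of ${\bf B}$, Lemma \ref{gammaB} lets me pass to $\|{\bm\gamma}^{k+1}-{\bm\gamma}^{k}\|^{2}\leq \frac{1}{\lambda_{{\bf B}^{\rm T}{\bf B}}}\|{\bf B}^{\rm T}({\bm\gamma}^{k+1}-{\bm\gamma}^{k})\|^{2}$, so everything reduces to bounding $\|{\bf B}^{\rm T}({\bm\gamma}^{k+1}-{\bm\gamma}^{k})\|^{2}$.

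For the algebraic heart of the argument, I would apply Lemma \ref{gammaUpdate} at two consecutive iterations to obtain
${\bf B}^{\rm T}{\bm\gamma}^{k+1}=-\nabla_{\bf y} g({\bf x}^{k+1},{\bf y}^{k})-\nabla h({\bf y}^{k})-L_{y}({\bf y}^{k+1}-{\bf y}^{k})$ and
${\bf B}^{\rm T}{\bm\gamma}^{k}=-\nabla_{\bf y} g({\bf x}^{k},{\bf y}^{k-1})-\nabla h({\bf y}^{k-1})-L_{y}({\bf y}^{k}-{\bf y}^{k-1})$. Subtracting, the mixed gradient difference $\nabla_{\bf y} g({\bf x}^{k+1},{\bf y}^{k})-\nabla_{\bf y} g({\bf x}^{k},{\bf y}^{k-1})$ would be split by inserting the intermediate point $({\bf x}^{k+1},{\bf y}^{k-1})$, so Lipschitz differentiability of $g$ controls the ${\bf x}$-jump by $L_{g}\|{\bf x}^{k+1}-{\bf x}^{k}\|$ and the ${\bf y}$-jump by $L_{g}\|{\bf y}^{k}-{\bf y}^{k-1}\|$; the $\nabla h$ difference contributes $L_{h}\|{\bf y}^{k}-{\bf y}^{k-1}\|$, and the $L_y$ terms contribute $L_{y}\|{\bf y}^{k+1}-{\bf y}^{k}\|$ and $L_{y}\|{\bf y}^{k}-{\bf y}^{k-1}\|$. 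Grouping the resulting pieces into three bundles, one for each of the three increments $\|{\bf x}^{k+1}-{\bf x}^{k}\|$, $\|{\bf y}^{k+1}-{\bf y}^{k}\|$, $\|{\bf y}^{k}-{\bf y}^{k-1}\|$ (absorbing $L_g+L_h=L_w$), and applying $(a+b+c)^{2}\leq 3(a^{2}+b^{2}+c^{2})$ produces the target coefficients $3L_{w}^{2}$, $3L_{y}^{2}$, and $3(L_{w}^{2}+L_{y}^{2})$; dividing by $\beta\lambda_{{\bf B}^{\rm T}{\bf B}}$ delivers $C_{2}$, $C_{3}$, $C_{4}$.

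The main obstacle I foresee is the bookkeeping in the third stage: the combined bound involves both a ``spatial'' change (${\bf x}^{k+1}$ vs.\ ${\bf x}^{k}$ inside $\nabla_{\bf y} g$) and two ``temporal'' changes of ${\bf y}$ straddling index $k$, and the decomposition must be organized so that the $L_{w}$-type bounds (from the gradient differences) pair cleanly with $L_{y}$-type bounds (from the proximal terms) to yield the mixed coefficient $L_{w}^{2}+L_{y}^{2}$ on $\|{\bf y}^{k}-{\bf y}^{k-1}\|^{2}$ rather than a crude $(L_{w}+L_{y})^{2}$. The rest of the argument is a routine combination of triangle inequality, Lipschitz continuity of $\nabla g$ and $\nabla h$, and the factor-of-three Jensen-type inequality.
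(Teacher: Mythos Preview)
Your overall architecture is exactly that of the paper: establish the equality from the linearity of $L_\beta$ in ${\bm\gamma}$ and the dual update, invoke Lemma~\ref{gammaB} to pass to $\|{\bf B}^{\rm T}({\bm\gamma}^{k+1}-{\bm\gamma}^{k})\|^{2}$, express ${\bf B}^{\rm T}{\bm\gamma}^{k+1}$ and ${\bf B}^{\rm T}{\bm\gamma}^{k}$ via Lemma~\ref{gammaUpdate}, and then finish with Lipschitz bounds and the factor-of-three inequality.

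The one tactical deviation that creates a gap is your handling of the gradient difference. If you split $\nabla_{{\bf y}}g({\bf x}^{k+1},{\bf y}^{k})-\nabla_{{\bf y}}g({\bf x}^{k},{\bf y}^{k-1})$ through the intermediate point $({\bf x}^{k+1},{\bf y}^{k-1})$ and then group ``by increment'' into three scalar bundles, the bundle attached to $\|{\bf y}^{k}-{\bf y}^{k-1}\|$ collects both the $L_w$-type gradient contributions and the $L_y$ proximal contribution, and after squaring you unavoidably obtain $(L_w+L_y)^{2}$, not $L_w^{2}+L_y^{2}$; this is precisely the ``crude'' constant you were worried about, and your proposal does not actually explain how it is avoided. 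The paper circumvents this by \emph{not} introducing an intermediate point. It writes
\[
{\bf B}^{\rm T}({\bm\gamma}^{k+1}-{\bm\gamma}^{k})
= -\big[\nabla w^{k+1}({\bf y}^{k})-\nabla w^{k}({\bf y}^{k-1})\big]
- L_y({\bf y}^{k+1}-{\bf y}^{k}) + L_y({\bf y}^{k}-{\bf y}^{k-1}),
\]
with $w^{j}({\bf y})=g({\bf x}^{j},{\bf y})+h({\bf y})$, applies $\|a+b+c\|^{2}\le 3(\|a\|^{2}+\|b\|^{2}+\|c\|^{2})$ to these three \emph{vectors}, and then bounds $\|a\|^{2}$ in one stroke via the \emph{joint} $L_w$-Lipschitz property of $\nabla_{({\bf x},{\bf y})}\big(g({\bf x},{\bf y})+h({\bf y})\big)$, which gives
\[
\|a\|^{2}\le L_w^{2}\big(\|{\bf x}^{k+1}-{\bf x}^{k}\|^{2}+\|{\bf y}^{k}-{\bf y}^{k-1}\|^{2}\big)
\]
without any cross term. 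That is what produces $C_4=\frac{3(L_w^{2}+L_y^{2})}{\beta\lambda_{{\bf B}^{\rm T}{\bf B}}}$ exactly. Make that single adjustment and your argument coincides with the paper's.
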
  

\begin{proof}
	By definition, the ascent of $L_\beta$ after the $(k+1)$th iteration of ${\bf \gamma}$ is
	\begin{equation}	
	L_{\beta}({\bf x}^{k+1},{\bf y}^{k+1},{\bm\gamma}^{k+1})-L_{\beta}({\bf x}^{k+1},{\bf y}^{k+1},{\bm\gamma}^{k})
	= \left\langle{\bm\gamma}^{k+1}-{\bm\gamma}^{k},{\bf A}{\bf x}^{k+1}+{\bf B}{\bf y}^{k+1}\right\rangle.\label{eq-proof-gamma-update-1}
	\end{equation}	
	By inserting the ${\bm\gamma}$-updating rule in \eqref{eq-proof-gamma-update-1} and applying Lemma \ref{gammaB}, we have
	 \begin{align}
	 L_{\beta}({\bf x}^{k+1},{\bf y}^{k+1},{\bm\gamma}^{k+1})-L_{\beta}({\bf x}^{k+1},{\bf y}^{k+1},{\bm\gamma}^{k})
	=  &\frac{1}{\beta}\Vert {\bm\gamma}^{k+1}-{\bm\gamma}^{k}\Vert^{2} \nonumber\\ \leq& \frac{1}{\beta \lambda_{{\bf B}^{\rm T}{\bf B}}}\Vert {\bf B}^{\rm T}({\bm\gamma}^{k+1}-{\bm\gamma}^{k})\Vert^{2},\label{eq-proof-gamma-update-2}
	\end{align} 
	where $\lambda_{{\bf B}^{\rm T}{\bf B}}$ denotes the smallest singular value of ${\bf B}^{\rm T}{\bf B}$.\\
	By Lemma \ref{gammaUpdate} and AM-GM Inequality we have
	 \begin{align}
	& \Vert {\bf B}^{\rm T}({\bm\gamma}^{k+1}-{\bm\gamma}^{k})\Vert^{2}\\
 =& \big\Vert\nabla w^{k+1}({\bf y}^{k})+L_{y}({\bf y}^{k+1}-{\bf y}^{k})-\nabla w^{k}({\bf y}^{k-1})
	-L_{y}({\bf y}^{k}-{\bf y}^{k-1})\big\Vert^{2} \\
	 \leq& 3\bigg(\Vert\nabla w^{k+1}({\bf y}^{k})-\nabla w^{k}({\bf y}^{k-1})\Vert^{2}
	+ L^2_{y}\Vert {\bf y}^{k+1}-{\bf y}^{k}\Vert^{2}
	+L_{y}^2 \Vert {\bf y}^{k}-{\bf y}^{k-1}\Vert^{2}\bigg)\label{eq-proof-gamma-update-3},
	\end{align} 
	where $w^{k}({\bf y}) = g({\bf x}^{k},{\bf y})+ h({\bf y})$ has been defined in the proof of Lemma \ref{y-update}.\\
	Because $g({\bf x},{\bf y})+ h({\bf y})$ is $L_w$ Lipschitz differentiable, we have
	 \begin{align*}
	&\Vert\nabla w^{k+1}({\bf y}^{k})-\nabla w^{k}({\bf y}^{k-1})\Vert^{2}\\
	=&\Vert\nabla_{\bf y} g({\bf x}^{k+1},{\bf y}^{k})+\nabla h({\bf y}^{k})-\nabla_{\bf y} g({\bf x}^{k},{\bf y}^{k-1})-\nabla h({\bf y}^{k-1})\Vert^{2}\\
	\le & L^{2}_{w}\left( \Vert {\bf x}^{k+1}-{\bf x}^{k}\Vert^{2}+\Vert {\bf y}^{k}-{\bf y}^{k-1}\Vert^{2}\right),
	\end{align*} 
	and together with \eqref{eq-proof-gamma-update-2} and \eqref{eq-proof-gamma-update-3} we have
	 \begin{align*}
	&L_{\beta}({\bf x}^{k+1},{\bf y}^{k+1},{\bm\gamma}^{k+1})-L_{\beta}({\bf x}^{k+1},{\bf y}^{k+1},{\bm\gamma}^{k})\\
	\le& C_2\Vert {\bf x}^{k+1}-{\bf x}^{k}\Vert^{2} + C_3\Vert {\bf y}^{k+1}-{\bf y}^{k} \Vert^{2}+C_4\Vert {\bf y}^{k}-{\bf y}^{k-1}\Vert^{2},
	\end{align*} 
	where
	 \begin{align}
	C_{2}  &:= \frac{3L^{2}_{w}}{\beta \lambda_{{\bf B}^{\rm T}{\bf B}}},\label{eqC2}\\
	C_{3} &:=\frac{3L_{y}^2 }{\beta \lambda_{{\bf B}^{\rm T}{\bf B}}},\label{eq_define_C1}\\
	C_{4} &:=\frac{3(L^{2}_{w}+L_{y}^2 )}{\beta \lambda_{{\bf B}^{\rm T}{\bf B}}}.\label{eq_define_C2}
	\end{align} 
\end{proof}

Then we design a sequence $\{m_{k}\}^{+\infty}_{k=1}$ by
\begin{equation}\label{eq-m_{k}}
m_{k} = L_{\beta}({\bf x}^{k},{\bf y}^{k},{\bm\gamma}^{k})+C_{m}\Vert {\bf y}^{k}-{\bf y}^{k-1}\Vert^{2},
\end{equation}
where $C_{m}$ is set according to \eqref{bb} in Theorem $1$. We will first prove the convergence of $\{m_{k}\}^{+\infty}_{k=1}$ and
then prove the convergence of $\{L_{\beta}({\bf x}^{k},{\bf y}^{k},{\bm\gamma}^{k})\}$.
  
\begin{lemma}\label{m_{k}}
	For the  linearized ADMM in Algorithm \ref{algorithm1},  under Assumption \ref{asp1}, if we choose the parameters $L_x$, $L_{y}$ and $\beta$ satisfying \eqref{bb}, then
	the sequence $\{m_{k}\}$ defined in \eqref{eq-m_{k}} is convergent.
\end{lemma}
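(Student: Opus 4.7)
My plan is to establish the convergence of $\{m_k\}$ via the monotone convergence theorem: I will show that $\{m_k\}$ is monotonically non-increasing and bounded below. Monotonicity follows by chaining the three per-step bounds already proved, while the lower bound is the delicate ingredient and must be teased out of Assumption \ref{asp1}.

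\textbf{Monotonicity.} Summing Lemmas \ref{x-update}, \ref{y-update} (primal descents) and subtracting Lemma \ref{gamma-update} (dual ascent) across one full cycle yields
\begin{align*}
L_{\beta}^{k+1}-L_{\beta}^{k}
\leq -(C_0-C_2)\|{\bf x}^{k+1}-{\bf x}^{k}\|^{2}
-(C_1-C_3)\|{\bf y}^{k+1}-{\bf y}^{k}\|^{2}
+C_4\|{\bf y}^{k}-{\bf y}^{k-1}\|^{2}.
\end{align*}
The tail $C_4\|{\bf y}^{k}-{\bf y}^{k-1}\|^{2}$, produced by Lemma \ref{gammaUpdate} via the one-step lag in $\boldsymbol{\gamma}$, is precisely what motivates the definition of $m_k$. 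Plugging into $m_{k+1}-m_k$ gives
\begin{align*}
m_{k+1}-m_{k}
\leq -(C_0-C_2)\|{\bf x}^{k+1}-{\bf x}^{k}\|^{2}
-(C_1-C_3-C_m)\|{\bf y}^{k+1}-{\bf y}^{k}\|^{2}
+(C_4-C_m)\|{\bf y}^{k}-{\bf y}^{k-1}\|^{2}.
\end{align*}
I then verify, using the hypotheses in \eqref{bb}, that each of $C_0-C_2$, $C_1-C_3-C_m$, and $C_m-C_4$ is non-negative. Concretely, $\beta\geq 3(L_w^2+L_y^2)/(\lambda_{{\bf B}^{\rm T}{\bf B}}C_m)$ gives $C_m\geq C_4$; $\beta\geq 3L_y^2/\lambda_{{\bf B}^{\rm T}{\bf B}}$ with the choice $C_m=(L_y+L_w^2)/2$ and $L_y\geq L_w+L_w^2+3$ yields $C_1\geq C_m+C_3$; and $L_x\geq L_g+\beta L_{\bf A}+6L_w^2+1$ combined with the lower bound on $\beta$ gives $C_0\geq C_2$. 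Hence $m_{k+1}\leq m_k$.

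\textbf{Lower boundedness (the main obstacle).} Since $m_k\geq L_{\beta}^{k}$ it suffices to lower bound $L_\beta({\bf x}^k,{\bf y}^k,\boldsymbol{\gamma}^k)$, and the indefinite cross term $\langle\boldsymbol{\gamma}^k,{\bf A}{\bf x}^k+{\bf B}{\bf y}^k\rangle$ makes this nontrivial. The strategy is: (i) use the dual update identity ${\bf A}{\bf x}^k+{\bf B}{\bf y}^k=(\boldsymbol{\gamma}^k-\boldsymbol{\gamma}^{k-1})/\beta$ to rewrite the two constraint-related terms as $\tfrac{1}{2\beta}(\|\boldsymbol{\gamma}^k\|^2-\|\boldsymbol{\gamma}^{k-1}\|^2)+\tfrac{1}{\beta}\|\boldsymbol{\gamma}^k-\boldsymbol{\gamma}^{k-1}\|^2$; (ii) bound $\|\boldsymbol{\gamma}^k\|$ using Lemma \ref{gammaB} (applicable because ${\bf Im}({\bf A})\subset{\bf Im}({\bf B})$ forces $\boldsymbol{\gamma}^k-\boldsymbol{\gamma}^0\in{\bf Im}({\bf B})$) together with Lemma \ref{gammaUpdate}, which expresses ${\bf B}^{\rm T}\boldsymbol{\gamma}^k$ in terms of $\nabla_{\bf y}g({\bf x}^k,{\bf y}^{k-1})$, $\nabla h({\bf y}^{k-1})$ and $L_y({\bf y}^k-{\bf y}^{k-1})$; (iii) use Lipschitz differentiability of $g$ and $h$ to bound those gradients by affine functions of $\|{\bf y}^{k-1}\|$; (iv) invoke coercivity of $g+f+h$ in ${\bf y}$ (Assumption \ref{asp1}, item 3) to dominate any residual quadratic-in-${\bf y}$ negative contributions. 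The parameter condition $\beta\geq(L_w+L_y+2)/\lambda_{{\bf B}^{\rm T}{\bf B}}$ provides the slack needed for coercivity to absorb these terms, uniformly in $k$.

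\textbf{Conclusion.} Combining monotonicity with the uniform lower bound produces a monotone bounded real sequence, which is necessarily convergent. The hardest step is the lower bound, since one must carefully balance the quadratic growth of $\|\boldsymbol{\gamma}^k\|^2$ (driven by gradients of $g$ and $h$ at ${\bf y}^{k-1}$) against the coercive growth of the objective in ${\bf y}$; the image-inclusion hypothesis and the specific choice of $\beta$ are what make this balance succeed.
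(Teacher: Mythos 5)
Your monotonicity argument is correct and coincides with the paper's: chain Lemmas \ref{x-update}, \ref{y-update}, \ref{gamma-update}, absorb the lagged term $C_4\Vert{\bf y}^{k}-{\bf y}^{k-1}\Vert^2$ into the definition of $m_k$, and check the sign conditions $C_0\ge C_2$, $C_1\ge C_3+C_m$, $C_m\ge C_4$ from \eqref{bb}. That half is fine.

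The lower-boundedness half has a genuine gap, and it is exactly the delicate step you flagged. Two problems. First, Assumption \ref{asp1} only asserts that $g+f+h$ is lower bounded and coercive in ${\bf y}$ \emph{over the feasible set} $\{{\bf A}{\bf x}+{\bf B}{\bf y}={\bf 0}\}$, whereas the iterates $({\bf x}^k,{\bf y}^k)$ are not feasible, so you cannot invoke coercivity (or even lower boundedness) of the objective at the iterates. Second, even if coercivity did apply, it carries no growth rate: a function that merely tends to $+\infty$ cannot be guaranteed to dominate a term that grows quadratically in $\|{\bf y}^{k-1}\|$, which is what your steps (ii)--(iii) produce after bounding $\|{\bm\gamma}^k\|^2$ through Lemmas \ref{gammaB} and \ref{gammaUpdate}. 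Moreover your rewriting in (i) leaves the telescoping difference $\tfrac{1}{2\beta}(\|{\bm\gamma}^k\|^2-\|{\bm\gamma}^{k-1}\|^2)$, which cannot be bounded below term by term without already knowing $\{{\bm\gamma}^k\}$ is bounded --- but that boundedness is Corollary \ref{bound1}, which is proved \emph{from} this lemma, so the argument would be circular.

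The missing idea in the paper's proof is to manufacture a feasible companion point: by ${\bf Im}({\bf A})\subset{\bf Im}({\bf B})$ there is ${\bf y}'_k$ with ${\bf B}{\bf y}'_k=-{\bf A}{\bf x}^k$, so ${\bf A}{\bf x}^k+{\bf B}{\bf y}^k={\bf B}({\bf y}^k-{\bf y}'_k)$ and both constraint terms in $L_\beta$ become functions of ${\bf y}^k-{\bf y}'_k$. Using Lemma \ref{gammaUpdate} to replace ${\bf B}^{\rm T}{\bm\gamma}^k$ by $-\nabla w^k({\bf y}^{k-1})-L_y({\bf y}^k-{\bf y}^{k-1})$, AM--GM, $\|{\bf B}{\bf z}\|^2\ge\lambda_{{\bf B}^{\rm T}{\bf B}}\|{\bf z}\|^2$, and the descent inequality of Lemma \ref{Lipschitz} (here the condition $\beta\lambda_{{\bf B}^{\rm T}{\bf B}}\ge L_w+L_y+2$ is what keeps the quadratic in $\|{\bf y}^k-{\bf y}'_k\|$ positive), one arrives at
\begin{equation*}
m_k\;\ge\; g({\bf x}^k,{\bf y}'_k)+f({\bf x}^k)+h({\bf y}'_k)+\tfrac12\Vert{\bf y}^k-{\bf y}'_k\Vert^2,
\end{equation*}
and the right-hand side is bounded below because $({\bf x}^k,{\bf y}'_k)$ \emph{is} feasible. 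Only lower boundedness over the feasible set is needed here; coercivity is reserved for Corollary \ref{bound1}. Without this reduction to a feasible point, your plan does not close.
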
  
\begin{proof}
	$\quad$

	1. Monotonicity of $\{m_k\}$
	
	By using Lemma \ref{x-update}, Lemma \ref{y-update}, and Lemma \ref{gamma-update}, we have
	 \begin{align}
	&L_\beta({\bf x}^k,{\bf y}^k,{\bm\gamma}^k) - L_\beta({\bf x}^{k+1},{\bf y}^{k+1},{\bm\gamma}^{k+1})\nonumber\\
	\ge & \ L_\beta({\bf x}^{k+1},{\bf y}^k,{\bm\gamma}^k) - L_\beta({\bf x}^{k+1},{\bf y}^{k+1},{\bm\gamma}^{k+1})+C_0\Vert {\bf x}^{k+1}-{\bf x}^{k}\Vert^{2}\nonumber\\
	\ge & \ L_\beta({\bf x}^{k+1},{\bf y}^{k+1},{\bm\gamma}^k)
	- L_\beta({\bf x}^{k+1},{\bf y}^{k+1},{\bm\gamma}^{k+1})
	+ C_1\Vert{\bf y}^{k+1} - {\bf y}^{k}\Vert^2+ C_0\Vert {\bf x}^{k+1}-{\bf x}^{k}\Vert^{2}\nonumber\\
	\ge & \ (C_1-C_3)\| {\bf y}^{k+1}-{\bf y}^k\|^2 - C_4\Vert {\bf y}^{k}-{\bf y}^{k-1}\Vert^{2}
	+ (C_0-C_2)\Vert {\bf x}^{k+1}-{\bf x}^{k}\Vert^{2}.\label{Eq11-0}
	\end{align} 
	By combining \eqref{Eq11-0} with the definition of $m_k$, we have
	 \begin{align}\label{Eq11}
   m_{k}-m_{k+1}
	\geq&(C_1-C_3-C_{m})\Vert {\bf y}^{k+1}-{\bf y}^{k}\Vert^{2}+(C_{m}-C_{4})\Vert {\bf y}^{k}-{\bf y}^{k-1}\Vert^{2}\nonumber
	\\&+  (C_0-C_2)\Vert {\bf x}^{k+1}-{\bf x}^{k}\Vert^{2}.
	\end{align} 
	Recall the definition of $C_0$, $C_1$, $C_3$, $C_4$ and the parameters $L_{x}$, $L_{y}$, $C_{m}$, $\beta$ we choose in \eqref{bb}, we get
	 \begin{align}
	C_0-C_2&=\frac{1}{2}(L_{x}-L_{g}-\beta L_{\bf A}-\frac{6L^{2}_{w}}{\beta \lambda_{{\bf B}^{\rm T}{\bf B}}}) \ge \frac{1}{2},\label{eq000}\\
	C_1-C_3-C_{m} &= \frac{2L_{y}-L_{w}}{2}-\frac{3L^{2}_{y}}{\beta \lambda_{{\bf B}^{\rm T}{\bf B}}}-C_{m} \ge \frac12 ,\label{eq003}\\
	C_{m}-C_4 &= C_{m}-\frac{3(L^{2}_{w}+L^{2}_{y})}{\beta \lambda_{{\bf B}^{\rm T}{\bf B}}}>0.\label{eq004}
	\end{align} 
	Therefore, $\{m_{k}\}$ is monotonically decreasing.
	
	2. Lower bound of $\{m_k\}$
	
	Next we will argue that $\{m_{k}\}$ is also lower bounded.
	By the assumption ${\bf Im}({\bf A})\subset {\bf Im}({\bf B})$, there exists ${\bf y}'_{k}$ such that ${\bf B}{\bf y}'_{k}=-{\bf A}{\bf x}^{k}$, so we have
	 \begin{align}
	m_{k}=&g({\bf x}^{k},{\bf y}^k)+f({\bf x}^{k})+h({\bf y}^{k})+\langle{\bm\gamma}^{k},{\bf B}({\bf y}^{k}-{\bf y}'_{k})\rangle \\&+\frac{\beta}{2}\Vert {\bf B}({\bf y}^{k}-{\bf y}'_{k})\Vert^{2}+C_{m}\Vert {\bf y}^{k}-{\bf y}^{k-1}\Vert^{2}.\label{eq-proof-m_{k}-1}
	\end{align} 
	By applying Lemma \ref{gammaUpdate} to the third item in the RHS of \eqref{eq-proof-m_{k}-1}, we have
	 \begin{align}
&	 \langle{\bm\gamma}^{k},{\bf B}({\bf y}^{k}-{\bf y}'_{k})\rangle\\
	= & \ \langle{\bf B}^{\rm T}{\bm\gamma}^{k},{\bf y}^{k}-{\bf y}'_{k}\rangle \nonumber\\
	= & \ \langle-\nabla w^k ({\bf y}^{k-1})-L_{y}({\bf y}^{k}-{\bf y}^{k-1}),{\bf y}^{k}-{\bf y}'_{k}\rangle\nonumber\\
	=& \ \big\langle\nabla w^k({\bf y}^{k})-\nabla w^k({\bf y}^{k-1}) -L_{y}({\bf y}^{k}-{\bf y}^{k-1}),{\bf y}^{k}-{\bf y}'_{k}\big\rangle
	- \langle\nabla w^k({\bf y}^{k}),{\bf y}^{k}-{\bf y}'_{k}\rangle. \label{eq-proof-m_{k}-2}
	\end{align} 
	By AM-GM Inequality, we bound the first item in the RHS of \eqref{eq-proof-m_{k}-2}
	 \begin{align}\label{eq-proof-m_{k}-3}
	\nonumber &\langle\nabla w^k({\bf y}^{k})-\nabla w^k({\bf y}^{k-1}) -L_{y}({\bf y}^{k}-{\bf y}^{k-1}),{\bf y}^{k}-{\bf y}'_{k}\rangle\\
	\nonumber =&\langle\nabla w^k({\bf y}^{k})-\nabla w^k({\bf y}^{k-1}) ,{\bf y}^{k}-{\bf y}'_{k}\rangle
	 -  L_{y}\langle{\bf y}^{k}-{\bf y}^{k-1},{\bf y}^{k}-{\bf y}'_{k}\rangle \\
	 \geq & -\frac12 \bigg(\Vert\nabla w^k({\bf y}^{k})-\nabla w^k({\bf y}^{k-1})\Vert^{2}+\Vert {\bf y}^{k}-{\bf y}'_{k}\Vert^{2} \bigg) \nonumber\\&-\frac{L_y}2\bigg(\Vert {\bf y}^{k}-{\bf y}^{k-1}\Vert^{2} +\Vert {\bf y}^{k}-{\bf y}'_{k}\Vert^{2}\bigg)\nonumber \\
	\geq& -\frac12 \left((L_{w}^{2}+L_{y})\Vert {\bf y}^{k}-{\bf y}^{k-1}\Vert^{2} + (L_{y}+1)\Vert {\bf y}^{k}-{\bf y}'_{k}\Vert^{2}\right),
	\end{align} 
	where the last inequality is from
	the Lipschitz differentiability of $w^k({\bf y})$.
	
	Considering that $\bf B$ has full rank and $\|{\bf B}{\bf z}\|^2 \ge \lambda_{{\bf B}^{\rm T}{\bf B}}\|{\bf z}\|^2$, for all
	$\bf z$,
	the fourth item in the RHS of \eqref{eq-proof-m_{k}-1} can be bounded by
	\begin{equation}\label{eq-proof-m_{k}-4}
	\Vert {\bf B}({\bf y}^{k}-{\bf y}'_{k})\Vert^{2}\geq \lambda_{{\bf B}^{\rm T}{\bf B}}\Vert {\bf y}^{k}-{\bf y}'_{k}\Vert^{2},
	\end{equation}
	By plugging \eqref{eq-proof-m_{k}-2}, \eqref{eq-proof-m_{k}-3}, and \eqref{eq-proof-m_{k}-4} into \eqref{eq-proof-m_{k}-1}, we get
	$$
	m_{k}\geq Q_1^k + Q_2^k,\label{eq40}
	$$
	where
	 \begin{align*}
	Q_1^k := &g({\bf x}^{k},{\bf y}^k)+f({\bf x}^{k}) + h({\bf y}^{k}) -\langle\nabla w^k({\bf y}^{k}),{\bf y}^{k}-{\bf y}'_{k}\rangle
	 \\&+\frac{1}{2}\left(\beta\lambda_{{\bf B}^{\rm T}{\bf B}}-L_{y}-1\right)\Vert {\bf y}^{k}-{\bf y}'_{k}\Vert^{2},\\
	Q_2^k :=& \left(C_{m}-\frac{L_{y}}{2}-\frac{L_{w}^{2}}{2}\right)\Vert {\bf y}^{k}-{\bf y}^{k-1}\Vert^{2}.
	\end{align*} 
	If both $Q_1^k$ and $Q_2^k$ are lower bounded, the proof will be completed.
	Let us first check $Q_2^k$.
	Recall the $C_{m}$ and $L_{y}$ we choose in \eqref{bb}, we get
	\begin{equation}\label{eq002}
	Q_2^k = \left(C_{m}-\frac{L_{y}}{2}-\frac{L_{w}^{2}}{2}\right)\Vert {\bf y}^{k}-{\bf y}^{k-1}\Vert^{2} = 0.
	\end{equation}
	For $Q_1^k$, recall the $\beta$ and $L_{y}$ we choose in \eqref{bb} and  we get
	\begin{equation}\label{eq001}
	\beta\lambda_{{\bf B}^{\rm T}{\bf B}} \ge L_{w}+L_{y}+2,
	\end{equation}
	then by Lemma \ref{Lipschitz} we have
	 \begin{align}
	\nonumber Q_1^k \ge &g({\bf x}^{k},{\bf y}^k)+f({\bf x}^{k}) + h({\bf y}^{k}) -\langle\nabla w^k({\bf y}^{k}),{\bf y}^{k}-{\bf y}'_{k}\rangle
	+\frac{L_{w}}{2}\Vert {\bf y}^{k}-{\bf y}'_{k}\Vert^{2}+\frac{1}{2}\Vert {\bf y}^{k}-{\bf y}'_{k}\Vert^{2}\nonumber\\
	\ge &g({\bf x}^{k},{\bf y}'_k)+f({\bf x}^{k})+h({\bf y}'_{k})+\frac{1}{2}\Vert {\bf y}^{k}-{\bf y}'_{k}\Vert^{2},\nonumber
	\end{align} 
	where $g({\bf x}^{k},{\bf y}'_k)+f({\bf x}^{k})+h({\bf y}'_{k})$ is lower bounded, because $({\bf x}^{k},{\bf y}'_k)$ belongs to the feasible set. Therefore, $\{m_{k}\}$ is lower bounded.
	Together with its monotonic decrease, we get $\{m_{k}\}$ is convergent.
\end{proof}

\subsection{Proof of Theorem \ref{convergence1}}
\label{proof-convergence1}

Recall in Lemma \ref{m_{k}}, we first prove that $\{m_k\}$ is monotonically decreasing by
 \begin{align*}
m_{k}-m_{k+1}
\geq & (C_1-C_3-C_{m}) \Vert {\bf y}^{k+1} - {\bf y}^{k}\Vert^{2}
+ (C_{m}-C_4)\Vert {\bf y}^{k}-{\bf y}^{k-1}\Vert^{2}
\\&+   (C_0-C_2)\Vert {\bf x}^{k+1}-{\bf x}^{k}\Vert^{2},
\end{align*} 
and then prove that $\{m_k\}$ is lower bounded by
\begin{equation}	\label{t1}
m_{k}\geq g({\bf x}^{k},{\bf y}'_k)+f({\bf x}^{k})+h({\bf y}'_{k})+\frac{1}{2}\Vert {\bf y}^{k}-{\bf y}'_{k}\Vert^{2},
\end{equation}	
where ${\bf y}'_{k}$ is defined by ${\bf B}{\bf y}'_{k}=-{\bf A}{\bf x}^{k}$.
Notice that ${\bf y}'_{k}$ always exists because of the assumption ${\bf Im}({\bf A})\subset {\bf Im}({\bf B})$.

By the convergence of $\{m_{k}\}$, $\Vert {\bf x}^{k+1}-{\bf x}^{k}\Vert$ and $\Vert {\bf y}^{k+1}-{\bf y}^{k}\Vert$ converges to zero.
By the definition of $\{m_{k}\}$ and its convergence, we readily get the convergence of
$L_{\beta}({\bf x}^{k},{\bf y}^{k},{\bm\gamma}^{k})$.
According to Lemma \ref{gamma-update}, $\Vert {\bm\gamma}^{k+1}-{\bm\gamma}^{k}\Vert$ converges to zero as well.

\subsection{Proof of Corollary \ref{bound1}}
\label{proof-bound1}

Recall \eqref{t1} in the proof of Lemma \ref{m_{k}}.
Because $g({\bf x},{\bf y})+f({\bf x})+h({\bf y})$ is coercive over the feasible set with respect to ${\bf y}$,
if $\big\{{\bf y}'_{k}\big\}$ diverges, then the RHS of \eqref{t1} diverges to positive infinity,
which contradicts with the convergence of $\{m_{k}\}$.

Because of the term $\frac{1}{2}\Vert {\bf y}^{k}-{\bf y}'_{k}\Vert^{2}$ on the RHS of \eqref{t1}, the boundedness of $\{{\bf y}^{k}\}$ can be derived from the boundedness of $\{{\bf y}'_{k}\}$.

In order to prove that $\{{\bm\gamma}^{k}\}$ is bounded, we only need to prove $\{{\bm\gamma}^{k}-{\bm\gamma}^{0}\}$ is bounded. By Lemma \ref{gammaB}, it is equivalent to the boundedness of $\{{\bf B}^{\rm T}({\bm\gamma}^{k}-{\bm\gamma}^{0})\}$ and further equivalent to the boundedness of $\{{\bf B}^{\rm T}{\bm\gamma}^{k}\}$. When  function $g({\bf x},{\bf y})$ degenerates to $g({\bf x})$, by Lemma \ref{gammaUpdate}, we get
$$
{\bf B}^{\rm T}{\bm\gamma}^{k+1}=-\nabla h({\bf y}^{k})-L_{y} ({\bf y}^{k+1}-{\bf y}^{k}),
$$
which implies that the boundedness of $\{{\bf B}^{\rm T}{\bm\gamma}^{k}\}$ can be deduced from the boundedness of $\{{\bf y}^{k}\}$.

\subsection{Proof of Theorem \ref{minimumy1}}
\label{proof-minimumy1}
$\quad$

1. Limit of $\nabla_{{\bm\gamma}}L_{\beta}$

When $k$ approaches infinity, we have
 \begin{align*}
\nabla_{{\bm\gamma}}L_{\beta}({\bf x}^{k+1},{\bf y}^{k+1},{\bm\gamma}^{k+1})&
={\bf A}{\bf x}^{k+1}+{\bf B}{\bf y}^{k+1}\\
&=\frac{1}{\beta}({\bm\gamma}^{k+1}-{\bm\gamma}^{k})\rightarrow{\bf 0}.
\end{align*} 

2. Limit of $\nabla_{\bm{y}}L_{\beta}$

By  Theorem \ref{convergence1} and Lemma \ref{gammaUpdate}, when $k$ approaches infinity, we have
 \begin{align*}
\nabla_{{\bf y}}L_{\beta}({\bf x}^{k},{\bf y}^{k},{\bm\gamma}^{k})
=& \nabla_{\bf y} g({\bf x}^{k},{\bf y}^{k})+\nabla h({\bf y}^{k})+{\bf B}^{\rm T}{\bm\gamma}^{k}+\beta {\bf B}^{\rm T}({\bf A}{\bf x}^{k}+{\bf B}{\bf y}^{k})\\
\rightarrow &\nabla_{\bf y} g({\bf x}^{k},{\bf y}^{k-1})+\nabla h({\bf y}^{k-1})+{\bf B}^{\rm T}{\bm\gamma}^{k}+{\bf B}^{\rm T}({\bm\gamma}^{k}-{\bm\gamma}^{k-1})\\
=&-L_{y}({\bf y}^{k}-{\bf y}^{k-1})+{\bf B}^{\rm T}({\bm\gamma}^{k}-{\bm\gamma}^{k-1})\rightarrow {\bf 0}.
\end{align*} 

3. Limit of $\partial_{\bm{x}}L_{\beta}$

By ${\bf x}$-updating rule, ${\bf x}^{k+1}$ is the minimum point of $\bar{f}^k({\bf x})$, which implies ${\bf 0}\in\partial\bar{f}^k({\bf x}^{k+1})$. Therefore, by the definition of $\bar{f}^k$ in \eqref{eq-twoblock-x-update} and Lemma \ref{subgradient-exist}, there exists
${\bf d}^{k+1}\in\partial f({\bf x}^{k+1})$ such that
\begin{equation}	\label{eqa1}
 \nabla_{\bf x} g({\bf x}^{k},{\bf y}^{k})+{\bf d}^{k+1}+{\bf A}^{\rm T}{\bm\gamma}^{k}+\beta {\bf A}^{\rm T}({\bf A}{\bf x}^k+{\bf B}{\bf y}^{k})
+L_{x}({\bf x}^{k+1}-{\bf x}^{k})={\bm 0}.
\end{equation}	
We further define
 \begin{align}\label{eqa2}
\nonumber \bar{{\bf d}}^{k+1}:=&\nabla_{\bf x} g({\bf x}^{k+1},{\bf y}^{k+1})+{\bf d}^{k+1}+{\bf A}^{\rm T}{\bm\gamma}^{k+1}
 +\beta {\bf A}^{\rm T}\big({\bf A}{\bf x}^{k+1}+{\bf B}{\bf y}^{k+1}\big),
\end{align} 
which, one may readily check, satisfies
$$
\bar{{\bf d}}^{k+1}\in\partial_{{\bf x}}L_{\beta}({\bf x}^{k+1},{\bf y}^{k+1},{\bm\gamma}^{k+1}).
$$
By Theorem \ref{convergence1}, we have that the primal residues $\Vert {\bf y}^{k+1}-{\bf y}^{k}\Vert$, $\Vert {\bf x}^{k}-{\bf x}^{k+1} \Vert$
and dual residue $\Vert {\bm\gamma}^{k+1}-{\bm\gamma}^{k}\Vert$ converge to zero as $k$ approaches infinity, therefore
 \begin{align*}
&\lim\limits_{k\rightarrow +\infty} \bar{{\bf d}}^{k+1}\\
=&\lim\limits_{k\rightarrow +\infty} \left[\nabla_{\bf x} g({\bf x}^{k+1},{\bf y}^{k+1})+{\bf d}^{k+1}+{\bf A}^{\rm T}{\bm\gamma}^{k+1}
+\beta {\bf A}^{\rm T}({\bf A}{\bf x}^{k+1}+{\bf B}{\bf y}^{k+1})\right]\\
=&\lim\limits_{k\rightarrow +\infty}\left[ \nabla_{\bf x} g({\bf x}^{k},{\bf y}^{k})+{\bf d}^{k+1}+{\bf A}^{\rm T}{\bm\gamma}^{k}
+\beta {\bf A}^{\rm T}({\bf A}{\bf x}^k+{\bf B}{\bf y}^{k})+L_{x}({\bf x}^{k+1}-{\bf x}^{k})\right]\\
=&{\bf 0},
\end{align*} 
where the last equality is from \eqref{eqa1}.

\subsection{Proof of Corollary \ref{obj_converge}}\label{proof-obj}
As $k$ tends to infinity, by Corollary \ref{bound1} and Theorem \ref{minimumy1}, we have that $\gamma^k$ is bounded and ${\bf A}{\bf x}^k+{\bf B}{\bf y}^k \rightarrow 0$.
Then we have that
 \begin{align*}
f({\bf x}^{k})+h({\bf y}^{k})
=& L_{\beta}({\bf x}^{k},{\bf y}^{k},{\bf \gamma}^{k})-\langle{\bf \gamma}^k,{\bf A}{\bf x}^k+{\bf B}{\bf y}^k \rangle
-\frac{\beta}{2} \Vert {\bf A}{\bf x}^k+{\bf B}{\bf y}^k \Vert ^{2}\\
\rightarrow& L_{\beta}({\bf x}^{k},{\bf y}^{k},{\bf \gamma}^{k})-0-0=L_{\beta}({\bf x}^{k},{\bf y}^{k},{\bf \gamma}^{k}).
\end{align*} 
Therefore, the value of objective function will converge, because $L_\beta$ will converge.

%

\bibliographystyle{unsrt}%
\bibliography{reference}

\begin{thebibliography}{10}

\bibitem{SP}
Michael Zibulevsky and Michael Elad.
\newblock $\ell_1$-$\ell_2$ optimization in signal and image processing.
\newblock {\em IEEE Signal Processing Magazine}, 27(3):76--88, 2010.

\bibitem{ML}
Anil~K Jain, M~Narasimha Murty, and Patrick~J Flynn.
\newblock Data clustering: a review.
\newblock {\em ACM computing surveys (CSUR)}, 31(3):264--323, 1999.

\bibitem{CO}
Zhi-Quan Luo and Wei Yu.
\newblock An introduction to convex optimization for communications and signal
  processing.
\newblock {\em IEEE Journal on selected areas in communications},
  24(8):1426--1438, 2006.

\bibitem{compressed_sensing}
David~L Donoho.
\newblock Compressed sensing.
\newblock {\em IEEE Transactions on information theory}, 52(4):1289--1306,
  2006.

\bibitem{dictionary_learning}
Qiang Zhang and Baoxin Li.
\newblock Discriminative k-svd for dictionary learning in face recognition.
\newblock In {\em Computer Vision and Pattern Recognition (CVPR), 2010 IEEE
  Conference on}, pages 2691--2698. IEEE, 2010.

\bibitem{channel}
Jin-Jun Xiao, Shuguang Cui, Zhi-Quan Luo, and Andrea~J Goldsmith.
\newblock Linear coherent decentralized estimation.
\newblock {\em IEEE Transactions on Signal Processing}, 56(2):757--770, 2008.

\bibitem{bertsekas1999nonlinear}
Dimitri~P Bertsekas.
\newblock {\em Nonlinear programming}.
\newblock Athena scientific Belmont, 1999.

\bibitem{bertsekas1989parallel}
Dimitri~P Bertsekas and John~N Tsitsiklis.
\newblock {\em Parallel and distributed computation: numerical methods},
  volume~23.
\newblock Prentice hall Englewood Cliffs, NJ, 1989.

\bibitem{OP2}
Stephen Boyd and Lieven Vandenberghe.
\newblock {\em Convex optimization}.
\newblock Cambridge university press, 2004.

\bibitem{n_m_f}
Daniel~D Lee and H~Sebastian Seung.
\newblock Algorithms for non-negative matrix factorization.
\newblock In {\em Advances in neural information processing systems}, pages
  556--562, 2001.

\bibitem{p_r}
Emmanuel~J Candes, Yonina~C Eldar, Thomas Strohmer, and Vladislav Voroninski.
\newblock Phase retrieval via matrix completion.
\newblock {\em SIAM review}, 57(2):225--251, 2015.

\bibitem{d_m_f}
Rainer Gemulla, Erik Nijkamp, Peter~J Haas, and Yannis Sismanis.
\newblock Large-scale matrix factorization with distributed stochastic gradient
  descent.
\newblock In {\em Proceedings of the 17th ACM SIGKDD international conference
  on Knowledge discovery and data mining}, pages 69--77. ACM, 2011.

\bibitem{d_c}
Eshref Januzaj, Hans-Peter Kriegel, and Martin Pfeifle.
\newblock Scalable density-based distributed clustering.
\newblock In {\em European Conference on Principles of Data Mining and
  Knowledge Discovery}, pages 231--244. Springer, 2004.

\bibitem{AP1}
Stephen Boyd, Neal Parikh, Eric Chu, Borja Peleato, and Jonathan Eckstein.
\newblock Distributed optimization and statistical learning via the alternating
  direction method of multipliers.
\newblock {\em Foundations and Trends{\textregistered} in Machine Learning},
  3(1):1--122, 2011.

\bibitem{AP2}
Katya Scheinberg, Shiqian Ma, and Donald Goldfarb.
\newblock Sparse inverse covariance selection via alternating linearization
  methods.
\newblock In {\em Advances in neural information processing systems}, pages
  2101--2109, 2010.

\bibitem{Shen2}
Wei Shi, Qing Ling, Kun Yuan, Gang Wu, and Wotao Yin.
\newblock On the linear convergence of the admm in decentralized consensus
  optimization.
\newblock {\em IEEE Trans. Signal Processing}, 62(7):1750--1761, 2014.

\bibitem{AP3}
Wotao Yin, Stanley Osher, Donald Goldfarb, and Jerome Darbon.
\newblock Bregman iterative algorithms for $\ell_1$-minimization with
  applications to compressed sensing.
\newblock {\em SIAM Journal on Imaging sciences}, 1(1):143--168, 2008.

\bibitem{AP4}
Xiaoqun Zhang, Martin Burger, and Stanley Osher.
\newblock A unified primal-dual algorithm framework based on bregman iteration.
\newblock {\em Journal of Scientific Computing}, 46(1):20--46, 2011.

\bibitem{AP5}
Chen Feng, Hong Xu, and Baochun Li.
\newblock An alternating direction method approach to cloud traffic management.
\newblock {\em IEEE Transactions on Parallel and Distributed Systems}, 2017.

\bibitem{ref_convex_1}
Wei Deng, Ming-Jun Lai, Zhimin Peng, and Wotao Yin.
\newblock Parallel multi-block admm with o (1/k) convergence.
\newblock {\em Journal of Scientific Computing}, pages 1--25, 2014.

\bibitem{ref_convex_2}
Tian-Yi Lin, Shi-Qian Ma, and Shu-Zhong Zhang.
\newblock On the sublinear convergence rate of multi-block admm.
\newblock {\em Journal of the Operations Research Society of China},
  3(3):251--274, 2015.

\bibitem{ref_convex_3}
Caihua Chen, Bingsheng He, Yinyu Ye, and Xiaoming Yuan.
\newblock The direct extension of admm for multi-block convex minimization
  problems is not necessarily convergent.
\newblock {\em Mathematical Programming}, 155(1-2):57--79, 2016.

\bibitem{ADMM1}
Roland Glowinski and A~Marroco.
\newblock Sur l'approximation, par {\'e}l{\'e}ments finis d'ordre un, et la
  r{\'e}solution, par p{\'e}nalisation-dualit{\'e} d'une classe de
  probl{\`e}mes de dirichlet non lin{\'e}aires.
\newblock {\em Revue fran{\c{c}}aise d'automatique, informatique, recherche
  op{\'e}rationnelle. Analyse num{\'e}rique}, 9(2):41--76, 1975.

\bibitem{Shen}
Xinyue Shen, Laming Chen, Yuantao Gu, and HC~So.
\newblock Square-root lasso with nonconvex regularization: An admm approach.
\newblock {\em IEEE Signal Processing Letters}, 23(7):934--938, 2016.

\bibitem{Chen}
Laming Chen and Yuantao Gu.
\newblock The convergence guarantees of a non-convex approach for sparse
  recovery.
\newblock {\em IEEE Transactions on Signal Processing}, 62(15):3754--3767,
  2014.

\bibitem{Wang}
Yu~Wang, Wotao Yin, and Jinshan Zeng.
\newblock Global convergence of admm in nonconvex nonsmooth optimization.
\newblock {\em arXiv}, 2015.

\bibitem{Hong}
Mingyi Hong, Zhi-Quan Luo, and Meisam Razaviyayn.
\newblock Convergence analysis of alternating direction method of multipliers
  for a family of nonconvex problems.
\newblock {\em SIAM Journal on Optimization}, 26(1):337--364, 2016.

\bibitem{Wang1}
Fenghui Wang, Wenfei Cao, and Zongben Xu.
\newblock Convergence of multi-block bregman admm for nonconvex composite
  problems.
\newblock {\em arXiv}, 2015.

\bibitem{jiang2016structured}
Bo~Jiang, Tianyi Lin, Shiqian Ma, and Shuzhong Zhang.
\newblock Structured nonconvex and nonsmooth optimization: algorithms and
  iteration complexity analysis.
\newblock {\em arXiv preprint arXiv:1605.02408}, 2016.

\bibitem{guo2017convergence}
Ke~Guo, DR~Han, and TT~Wu.
\newblock Convergence of alternating direction method for minimizing sum of two
  nonconvex functions with linear constraints.
\newblock {\em International Journal of Computer Mathematics},
  94(8):1653--1669, 2017.

\bibitem{yang2017alternating}
Lei Yang, Ting~Kei Pong, and Xiaojun Chen.
\newblock Alternating direction method of multipliers for a class of nonconvex
  and nonsmooth problems with applications to background/foreground extraction.
\newblock {\em SIAM Journal on Imaging Sciences}, 10(1):74--110, 2017.

\bibitem{li2015global}
Guoyin Li and Ting~Kei Pong.
\newblock Global convergence of splitting methods for nonconvex composite
  optimization.
\newblock {\em SIAM Journal on Optimization}, 25(4):2434--2460, 2015.

\bibitem{lin2017extragradient}
Tianyi Lin, Shiqian Ma, and Shuzhong Zhang.
\newblock An extragradient-based alternating direction method for convex
  minimization.
\newblock {\em Foundations of Computational Mathematics}, 17(1):35--59, 2017.

\bibitem{Linearized_1}
Yuyuan Ouyang, Yunmei Chen, Guanghui Lan, and Eduardo Pasiliao~Jr.
\newblock An accelerated linearized alternating direction method of
  multipliers.
\newblock {\em SIAM Journal on Imaging Sciences}, 8(1):644--681, 2015.

\bibitem{Linearized_3}
Raymond~H Chan, Min Tao, and Xiaoming Yuan.
\newblock Linearized alternating direction method of multipliers for
  constrained linear least-squares problem.
\newblock {\em East Asian Journal on Applied Mathematics}, 2(04):326--341,
  2012.

\bibitem{lin2011linearized}
Zhouchen Lin, Risheng Liu, and Zhixun Su.
\newblock Linearized alternating direction method with adaptive penalty for
  low-rank representation.
\newblock In {\em Advances in neural information processing systems}, pages
  612--620, 2011.

\bibitem{ling2015dlm}
Qing Ling, Wei Shi, Gang Wu, and Alejandro Ribeiro.
\newblock Dlm: Decentralized linearized alternating direction method of
  multipliers.
\newblock {\em IEEE Transactions on Signal Processing}, 63(15):4051--4064,
  2015.

\bibitem{linear_sparse}
Zhen-Zhen Yang and Zhen Yang.
\newblock Fast linearized alternating direction method of multipliers for the
  augmented $\ell_1$-regularized problem.
\newblock {\em Signal, Image and Video Processing}, 9(7):1601--1612, 2015.

\bibitem{gu2014fast}
Renliang Gu and Aleksandar Dogand{\v{z}}i{\'c}.
\newblock A fast proximal gradient algorithm for reconstructing nonnegative
  signals with sparse transform coefficients.
\newblock In {\em Signals, Systems and Computers, 2014 48th Asilomar Conference
  on}, pages 1662--1667. IEEE, 2014.

\bibitem{linear_matrix}
Junfeng Yang and Xiaoming Yuan.
\newblock Linearized augmented lagrangian and alternating direction methods for
  nuclear norm minimization.
\newblock {\em Mathematics of computation}, 82(281):301--329, 2013.

\bibitem{linear_image}
T~Jeong, H~Woo, and S~Yun.
\newblock Frame-based poisson image restoration using a proximal linearized
  alternating direction method.
\newblock {\em Inverse Problems}, 29(7):075007, 2013.

\bibitem{nien2015fast}
Hung Nien and Jeffrey~A Fessler.
\newblock Fast x-ray ct image reconstruction using a linearized augmented
  lagrangian method with ordered subsets.
\newblock {\em IEEE transactions on medical imaging}, 34(2):388--399, 2015.

\bibitem{woo2013proximal}
Hyenkyun Woo and Sangwoon Yun.
\newblock Proximal linearized alternating direction method for multiplicative
  denoising.
\newblock {\em SIAM Journal on Scientific Computing}, 35(2):B336--B358, 2013.

\bibitem{ng2011inexact}
Michael~K Ng, Fan Wang, and Xiaoming Yuan.
\newblock Inexact alternating direction methods for image recovery.
\newblock {\em SIAM Journal on Scientific Computing}, 33(4):1643--1668, 2011.

\bibitem{bigdata_ml}
Omar~Y Al-Jarrah, Paul~D Yoo, Sami Muhaidat, George~K Karagiannidis, and Kamal
  Taha.
\newblock Efficient machine learning for big data: A review.
\newblock {\em Big Data Research}, 2(3):87--93, 2015.

\bibitem{bigdata_sp}
Georgios~B Giannakis, Francis Bach, Raphael Cendrillon, Michael Mahoney, and
  Jennifer Neville.
\newblock Signal processing for big data [from the guest editors].
\newblock {\em IEEE Signal Processing Magazine}, 31(5):15--16, 2014.

\bibitem{scutari2014distributed}
Gesualdo Scutari, Francisco Facchinei, Lorenzo Lampariello, and Peiran Song.
\newblock Distributed methods for constrained nonconvex multi-agent
  optimization-part i: theory.
\newblock {\em arXiv preprint arXiv:1410.4754}, 2014.

\bibitem{scutari2016parallel}
Gesualdo Scutari, Francisco Facchinei, Lorenzo Lampariello, Peiran Song, and
  Stefania Sardellitti.
\newblock Parallel and distributed methods for nonconvex optimization--part ii:
  Applications.
\newblock {\em arXiv preprint arXiv:1601.04059}, 2016.

\bibitem{multi-block1}
Stephen Boyd, Neal Parikh, Eric Chu, Borja Peleato, and Jonathan Eckstein.
\newblock Distributed optimization and statistical learning via the alternating
  direction method of multipliers.
\newblock {\em Foundations and Trends{\textregistered} in Machine Learning},
  3(1):1--122, 2011.

\bibitem{he2016proximal}
Bingsheng He, Hong-Kun Xu, and Xiaoming Yuan.
\newblock On the proximal jacobian decomposition of alm for multiple-block
  separable convex minimization problems and its relationship to admm.
\newblock {\em Journal of Scientific Computing}, 66(3):1204--1217, 2016.

\bibitem{multi-parallel-1}
Huahua Wang, Arindam Banerjee, and Zhi-Quan Luo.
\newblock Parallel direction method of multipliers.
\newblock In {\em Advances in Neural Information Processing Systems}, pages
  181--189, 2014.

\bibitem{multi-parallel-2}
Tsung-Hui Chang, Mingyi Hong, and Xiangfeng Wang.
\newblock Multi-agent distributed optimization via inexact consensus admm.
\newblock {\em IEEE Transactions on Signal Processing}, 63(2):482--497, 2015.

\bibitem{multi-parallel-3}
Kai Wang, Jitamitra Desai, and Hongjin He.
\newblock A note on augmented lagrangian-based parallel splitting method.
\newblock {\em Optimization Letters}, 9(6):1199--1212, 2015.

\bibitem{multi-block2}
Yue Hu, Eric~C Chi, and Genevera~I Allen.
\newblock Admm algorithmic regularization paths for sparse statistical machine
  learning.
\newblock In {\em Splitting Methods in Communication, Imaging, Science, and
  Engineering}, pages 433--459. Springer, 2016.

\bibitem{multi-block3}
Bo~Wahlberg, Stephen Boyd, Mariette Annergren, and Yang Wang.
\newblock An admm algorithm for a class of total variation regularized
  estimation problems.
\newblock {\em IFAC Proceedings Volumes}, 45(16):83--88, 2012.

\bibitem{subgradient}
R~Tyrrell Rockafellar and Roger J-B Wets.
\newblock {\em Variational analysis}, volume 317.
\newblock Springer Science \& Business Media, 2009.

\bibitem{parikh2014proximal}
Neal Parikh, Stephen Boyd, et~al.
\newblock Proximal algorithms.
\newblock {\em Foundations and Trends{\textregistered} in Optimization},
  1(3):127--239, 2014.

\bibitem{merchant1995assignment}
Arif Merchant and Bhaskar Sengupta.
\newblock Assignment of cells to switches in pcs networks.
\newblock {\em IEEE/ACM Transactions on Networking (TON)}, 3(5):521--526, 1995.

\bibitem{li2005price}
Tao Li and Mohammad Shahidehpour.
\newblock Price-based unit commitment: A case of lagrangian relaxation versus
  mixed integer programming.
\newblock {\em IEEE transactions on power systems}, 20(4):2015--2025, 2005.

\bibitem{glover1986future}
Fred Glover.
\newblock Future paths for integer programming and links to artificial
  intelligence.
\newblock {\em Computers \& operations research}, 13(5):533--549, 1986.

\bibitem{pallottino2002conflict}
Lucia Pallottino, Eric~M Feron, and Antonio Bicchi.
\newblock Conflict resolution problems for air traffic management systems
  solved with mixed integer programming.
\newblock {\em IEEE transactions on intelligent transportation systems},
  3(1):3--11, 2002.

\bibitem{gasso2009recovering}
Gilles Gasso, Alain Rakotomamonjy, and St{\'e}phane Canu.
\newblock Recovering sparse signals with a certain family of nonconvex
  penalties and dc programming.
\newblock {\em IEEE Transactions on Signal Processing}, 57(12):4686--4698,
  2009.

\bibitem{chartrand2007exact}
Rick Chartrand.
\newblock Exact reconstruction of sparse signals via nonconvex minimization.
\newblock {\em IEEE Signal Processing Letters}, 14(10):707--710, 2007.

\bibitem{saab2010sparse}
Rayan Saab and {\"O}zg{\"u}r Y{\i}lmaz.
\newblock Sparse recovery by non-convex optimization--instance optimality.
\newblock {\em Applied and Computational Harmonic Analysis}, 29(1):30--48,
  2010.

\bibitem{Weakly_convex}
Jean-Philippe Vial.
\newblock Strong and weak convexity of sets and functions.
\newblock {\em Mathematics of Operations Research}, 8(2):231--259, 1983.

\end{thebibliography}
\end{document}